\def\distribute{}
\date{July 10, 2018}
\title[
   The analytic extension of 
  exceptional catenoids]{
  Analytic extension of 
  exceptional\\ constant mean curvature
  one catenoids \\
  in de Sitter 3-space
}
\author{S. Fujimori}
\author{Y. Kawakami}
\author{M. Kokubu}
\author{W. Rossman}
\author{M. Umehara}
\author{K. Yamada}
\address[Shoichi Fujimori]{
  Department of Mathematics, Okayama University, 
  Okayama 700-8530, Japan.
}
\email{%
  fujimori@math.okayama-u.ac.jp
}
\address[Yu Kawakami]{
  Graduate School of Natural Science and Technology,
  Kanazawa University,
  Kanazawa, 920-1192, Japan.}
\email{%
  y-kwkami@se.kanazawa-u.ac.jp
}
\address[Masatoshi Kokubu]{
  Department of Mathematics, School of Engineering, 
  Tokyo Denki University, 
  Tokyo 120-8551, Japan.}
\email{%
  kokubu@cck.dendai.ac.jp
}
\address[Wayne Rossman]{
   Department of Mathematics,
   Faculty of Science,
   Kobe University,
   Rokko, Kobe 657-8501, Japan.}
\email{%
 wayne@math.kobe-u.ac.jp
}
\address[Masaaki Umehara]{
  Department of Mathematical and Computing Sciences,
  Tokyo Institute of Technology,
  2-12-1-W8-34, O-okayama Meguro-ku,
  Tokyo 152-8552, Japan.
}
\email{%
  umehara@is.titech.ac.jp}
\address[Kotaro Yamada]{
  Department of Mathematics,
  Tokyo Institute of Technology,
  Tokyo 152-8551, Japan.
}
\email{kotaro@math.titech.ac.jp}
\keywords{{constant mean curvature, }{de Sitter space, }
{analytic extension}}
\subjclass[2010]{53A10, 53A35; 53C50}
\newcommand{\op}[1]{{\operatorname{#1}}}
\newcommand{\SL}{\op{SL}}
\newcommand{\pmt}[1]{{\begin{pmatrix} #1  \end{pmatrix}}}
\newcommand{\R}{\boldsymbol{R}}
\newcommand{\C}{\boldsymbol{C}}
\newcommand{\Z}{\boldsymbol{Z}}
\renewcommand{\phi}{\varphi}
\newcommand\A{{\mathcal A}}
\newcommand\CC{{\mathcal C}}
\newcommand\D{{\mathcal D}}
\newcommand{\inner}[2]{\left\langle{#1},{#2}\right\rangle}
\newcommand{\trans}[1]{\vphantom{#1}^{t}#1}
\newcommand{\Herm}{\op{Herm}}
\newcommand{\imag}{\op{i}}
\newcommand{\sgn}{\op{sgn}}
\newcommand{\I}{\op{I}}
\newcommand{\II}{\op{I\!I}}
\newcommand{\J}{\op{J}}
\newtheorem{thm}{Theorem}
\theoremstyle{definition}
\newtheorem{rem}{Remark}
\newtheorem{prop}[thm]{Proposition}
\newtheorem*{ack}{Acknowledgements}
\begin{document}
\maketitle

\begin{abstract}
Catenoids in de Sitter $3$-space $S^3_1$ belong to 
a certain class of space-like constant mean curvature 
one surfaces.
In a previous work,
the authors classified such catenoids, 
and found that two different classes of countably many
exceptional elliptic catenoids are not realized
as closed subsets in $S^3_1$.
Here we show that such exceptional 
catenoids 
have closed analytic extensions
in $S^3_1$ with interesting properties.
\end{abstract}

%%%%%%%%%%%%%%%%%%%%%%%%%%%%%%%%%%%%%%%%%%%%%%%%
\section{Introduction.}
We denote by $S^3_1$ the de Sitter $3$-space,
which is a simply-connected Lorentzian 
$3$-manifold with constant sectional curvature $1$.
Let $\R^4_1$ be the Lorentz-Minkowski $4$-space
with the metric $\inner{~}{~}$ of 
signature $(-+++)$.
Then  
\[
  S^3_1=
  \{X \in\R^4_1\, ; \, \inner{X}{X}=1\}
\]
with metric induced from $\R^4_1$. 
We identify $\R^4_1$ with the  
$2\times 2$ Hermitian matrices
$\Herm(2)$ by
\[
(t,x,y,z)
     \longleftrightarrow
     \pmt{
      t+z       & x+\imag y \\
      x-\imag y & t-z
       },
\]
where $\imag=\sqrt{-1}$.
Then $S^3_1$ is represented as 
\begin{align*}
  S^3_1&=\{X\in\Herm(2)\,;\,\det X=-1\}
       =\{ae_3a^*\,;\,a\in \SL(2,\C)\},
\end{align*}
where $a^*:=\trans{\overline{a}}$ is 
the conjugate transpose of $a$, 
and 
\[
   e_3:=\begin{pmatrix}
	       1 & \hphantom{-}0\\
	       0 & -1
	\end{pmatrix}.
\]

To draw surfaces in $S^3_1$,
we use the \emph{stereographic hollow ball model} given in \cite{FNSSY} 
as follows:
\begin{multline}\label{eq:hollow}
   \varPi :
     S^3_1 \ni (t,x,y,z)\longmapsto \frac{1}{\delta}(x,y,z)\in \R^3\\
      \left(\delta:= t+\sqrt{t^2+x^2+y^2+z^2}=t+\sqrt{2t^2+1}\right).
\end{multline}
This projection $\varPi$ is the composition
of central projection of $S^3_1$
to the unit sphere $S^3$ centered at the
origin in $\R^4$ and
usual stereographic projection of
$S^3$ into $\R^3$ from $(0,0,0,-1)$.
The image of $\varPi$
is the set 
\begin{equation}\label{eq:model}
 \D^3:=
    \left\{
	     \xi\in\R^3\,
	     ;\,\sqrt{2}-1<|\mathbf \xi|<\sqrt{2}+1
			       	    \right\},
\end{equation}
where $|\xi|:=\sqrt{\xi_1^2+\xi_2^2+\xi_3^2}$
for $\xi=(\xi_1,\xi_2,\xi_3)$.

In \cite{FKKRUY2}, the authors classified
all \emph{catenoids} in $S^3_1$
(i.e.\  weakly complete constant mean curvature one
surfaces in $S^3_1$ of genus zero
with two regular ends  whose hyperbolic Gauss map is
of degree one).
There are three types of catenoids:
\begin{itemize}
 \item elliptic catenoids,
 \item the parabolic catenoid, and
 \item hyperbolic catenoids.
\end{itemize}
Parabolic catenoids have only one
congruence class, whose secondary Gauss map
is given by
$$
g=\frac{1+\log z}{-1+\log z},
$$
and they are rotationally symmetric surfaces
with one cone-like singular point
and two embedded ends. 
On the other hand, the
secondary Gauss map of hyperbolic catenoids
are of the form
$$
g=\frac{g_0-\imag}{g_0+\imag}, \qquad
g_0:=\exp((m+\imag \tau)\log z)=z^{m+\imag\tau},
$$
where $m$ is a non-negative integer, and
$\tau$ is a non-zero real number.
When $m\ne 0$ (resp. $m=0$), 
hyperbolic catenoids admit only
cuspidal edge singularities
(resp. cone-like singular points),
see \cite[Page 36]{FKKRUY2}.
Recently, in a joint work with Seong-Deog Yang, 
the authors \cite{FKKRUYY2} proved 
that
all hyperbolic catenoids do not admit
any analytic extension.

On the other hand, there are 
many subclasses of
elliptic catenoids,
whose secondary Gauss maps
$g$ are given by
\begingroup
\renewcommand{\theenumi}{(\roman{enumi})}
\renewcommand{\labelenumi}{(\roman{enumi})}
\begin{enumerate}
 \item\label{item:cat:1} 
      $g=z^\alpha$\,\, ($0<\alpha<1$),
 \item\label{item:cat:2} 
      $g=z^\alpha$\,\, ($\alpha>1$),
 \item\label{item:cat:3} 
      $g=z^m+c$\,\, ($m=2,3,\dots$) with $c\in (0,\infty)\setminus\{1\}$,
 \item\label{item:cat:4} 
      $g=z^m+1$\,\, ($m=2,3,\dots$),
 \item\label{item:cat:5} 
      $g=(z^m-1)/(z^m+1)$\,\,
      ($m=2,3,\dots$).
\end{enumerate}
\endgroup
Except for the two cases \ref{item:cat:4} and \ref{item:cat:5},
all elliptic catenoids are closed subsets
of $S^3_1$, since the singular sets
of catenoids of type \ref{item:cat:1}--\ref{item:cat:3} are compact.
In this paper, we call
the catenoids in the class \ref{item:cat:4} (resp.\ \ref{item:cat:5})
\emph{exceptional catenoids of type I}
(resp.\ \emph{exceptional catenoids of type II})
and we study these two classes.

For each $m=2$, $3$, \dots, we set
\begin{equation}\label{eq:ec0}
   F^{\I}_m
   :=
\ifx\distribute\undefined
   \text{\small$
\fi
   \dfrac{z^{-\frac{m+1}{2}}}{2 \sqrt{m}}
\ifx\distribute\undefined
   $}
   \text{\footnotesize$
\fi
   \pmt{
      (m+1)z &&   z\bigl((m-1) z^{m} -m-1)\bigr) \\
      m-1  &&   (m+1) z^{m}  -m+1
       }
\ifx\distribute\undefined
$}
\fi
\end{equation}
and
\ifx\distribute\undefined
\begin{multline}\label{eq:ec}
 F^{\II}_m
    :=
   \text{\small$
 \dfrac{z^{-\frac{m+1}{2}}}{2 \sqrt{2m}}
    \times
$}\\
 \text{\footnotesize$
 \pmt{
   z\bigl((1-m)z^m+m+1\bigr) &&
   z\bigl((m-1)z^m+m+1\bigr) \\
   -(m+1)z^m+m-1 &&
    (m+1)z^m+m-1
  }.
$}
\end{multline}
\else
\begin{equation}\label{eq:ec}
 F^{\II}_m
    :=
 \dfrac{z^{-\frac{m+1}{2}}}{2 \sqrt{2m}}
 \pmt{
   z\bigl((1-m)z^m+m+1\bigr) &&
   z\bigl((m-1)z^m+m+1\bigr) \\
   -(m+1)z^m+m-1 &&
    (m+1)z^m+m-1
  }.
\end{equation}
\fi
The maps
$f^{\J}_m:\C\setminus \{0\}\to S^3_1$
defined by
\[
   f^{\J}_m
     :=F^{\J}_m e_3
     (F_m^{\J})^*
      \qquad (\J=\I, \II)
\]
give the exceptional catenoids.
These expressions
are obtained by shifting $m$ to $m-1$
in \cite[Prop.\ 4.9]{FKKRUY2}.
We will show that the image of each $f^{\J}_m$ ($\J=\I,\II$)
has an analytic 
extension $\CC^{\J}_m$ 
which is a closed set in $S^3_1$.

A subset $\A$ of a manifold $M^n$
is called \emph{almost embedded}
(resp.\ \emph{almost immersed})
if there is a discrete subset $D$ of $\A$
such that $\A\setminus D$ is 
the image of an embedding (resp.\ an immersion)
of a manifold into $M^n$.
For example (cf.\ \cite{FKKRUY2}), 
\begin{itemize}
\item catenoids of class \ref{item:cat:3}
      are not almost immersed,
\item catenoids of class \ref{item:cat:1}
      are almost immersed, but not
      almost embedded,
\item catenoids of class \ref{item:cat:2}
      are almost embedded.
\end{itemize}
\ifx\undefined\distribute
\begin{figure}[htb]%
\else
\begin{figure}
\fi
 \begin{center}
       \includegraphics[width=3.6cm]{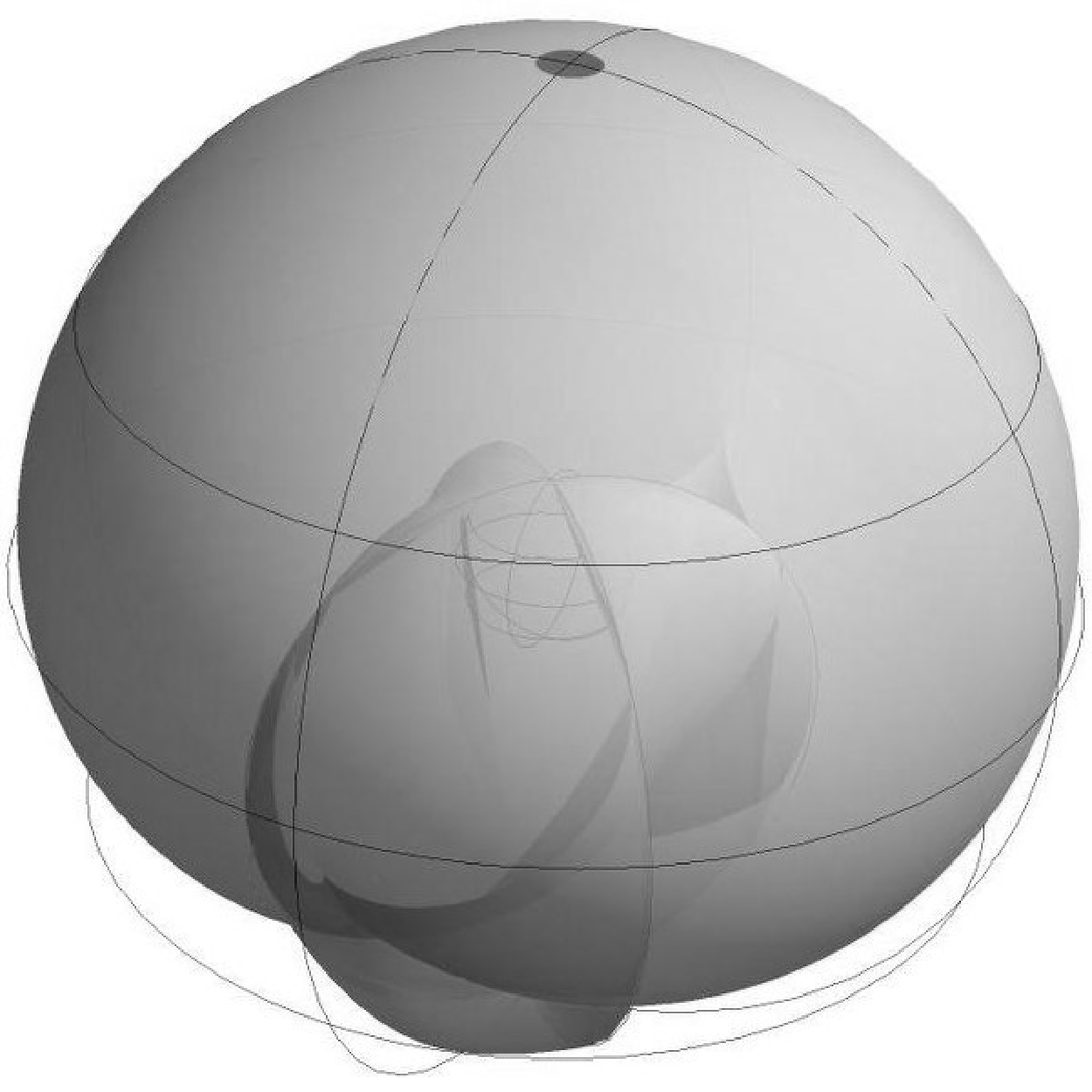} 
\ifx\undefined\distribute
\else
\hspace{0.5cm}
\fi
       \includegraphics[width=3.6cm]{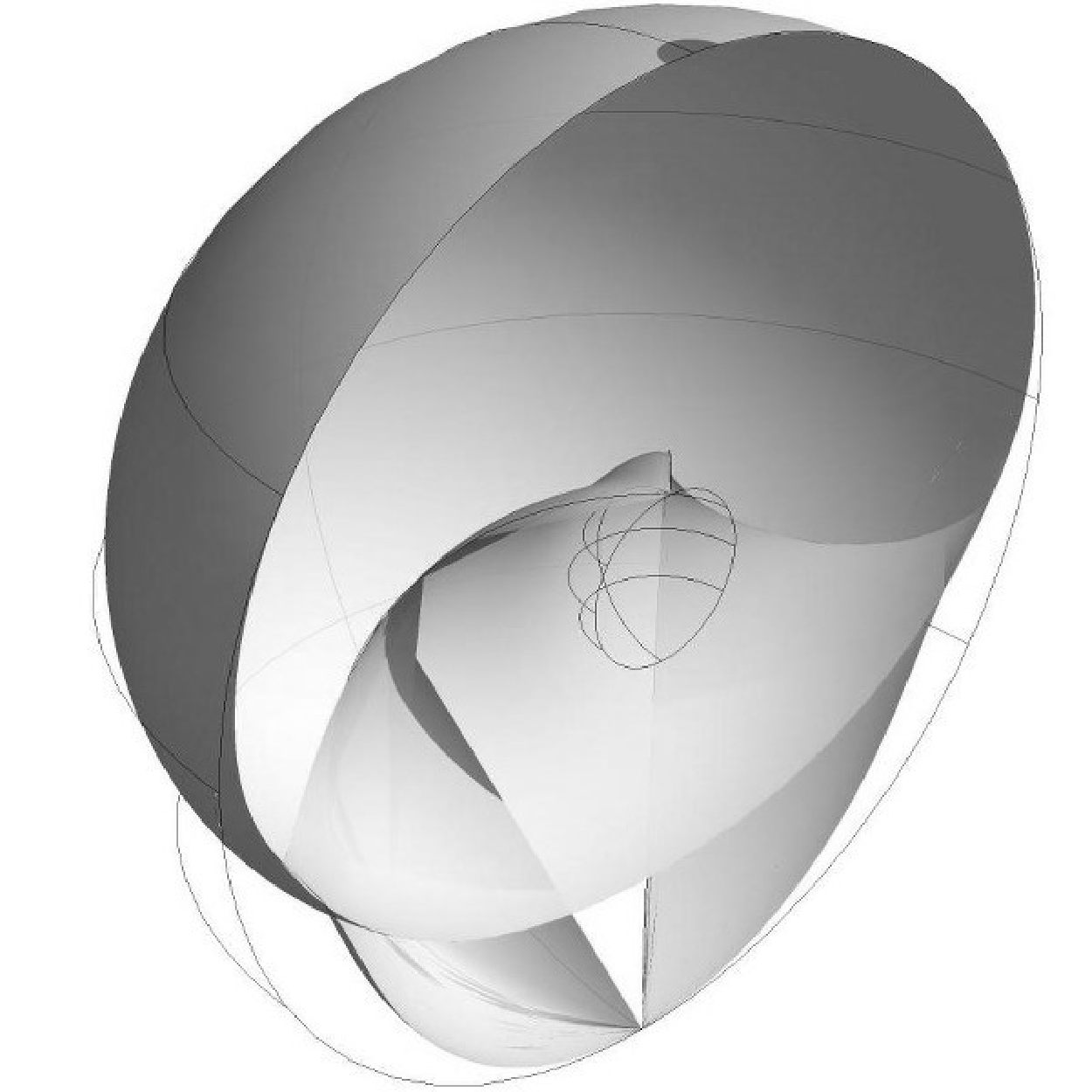} 
\ifx\undefined\distribute
\else
\hspace{0.5cm}
\fi
       \includegraphics[width=3.6cm]{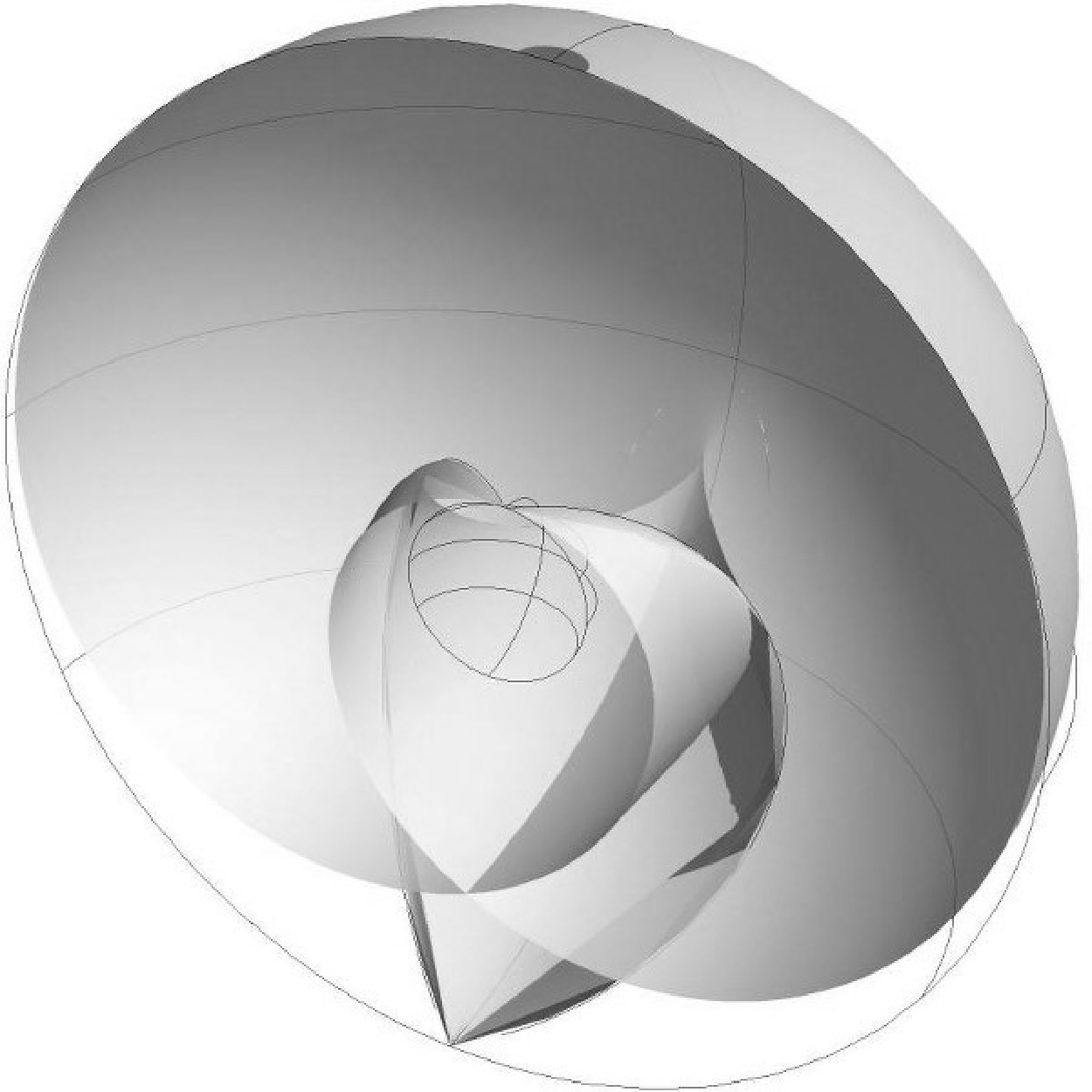} 
 \end{center}
 \caption{%
 The image of $f^{\I}_2$ (left) and halves of it (center and right). 
 }%
\label{fig:typeI-f2}
\end{figure}
\ifx\undefined\distribute
\begin{figure}[htb]%
\else
\begin{figure}
\fi
 \begin{center}
       \includegraphics[width=3.6cm]{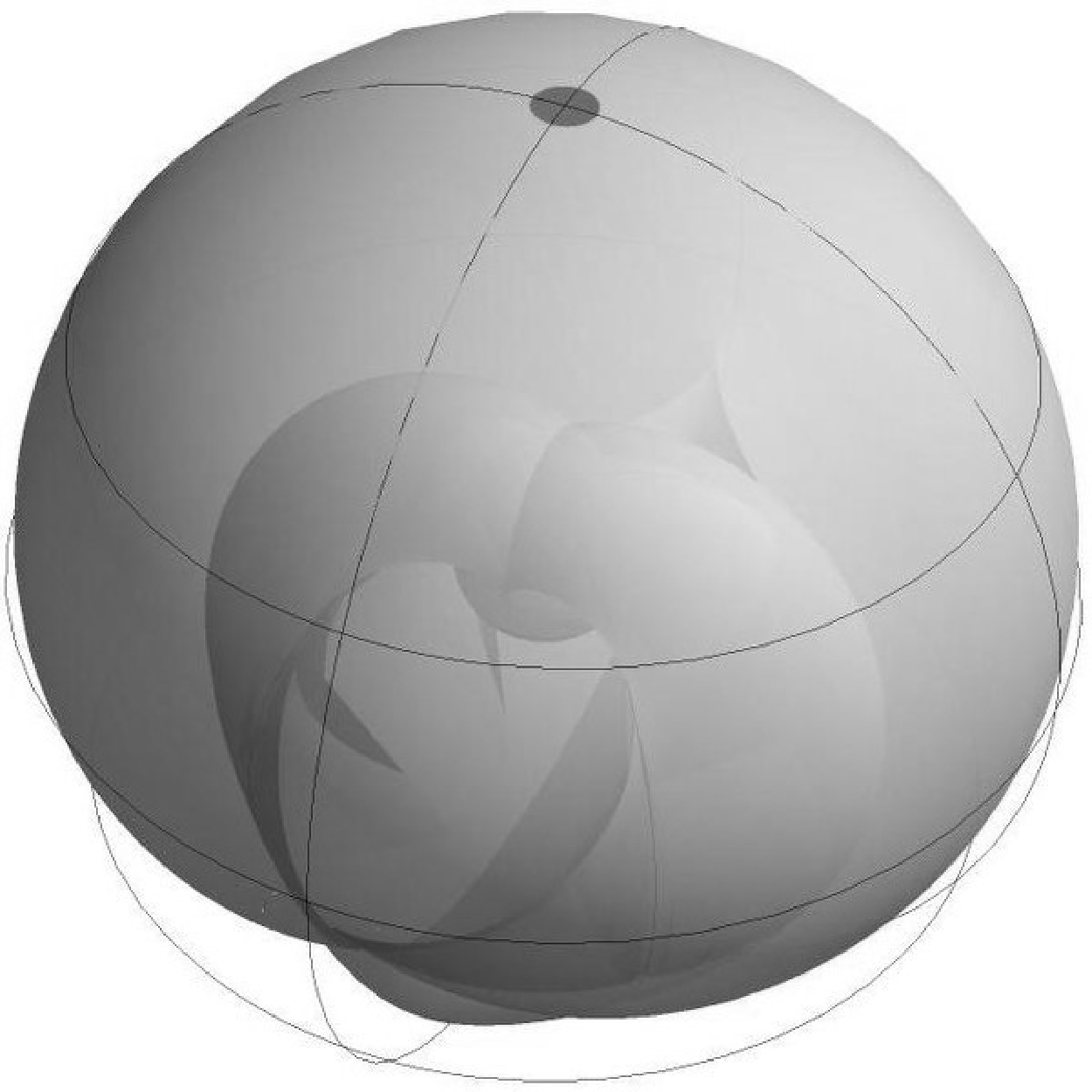} 
\ifx\undefined\distribute
\else
\hspace{0.5cm}
\fi
       \includegraphics[width=3.6cm]{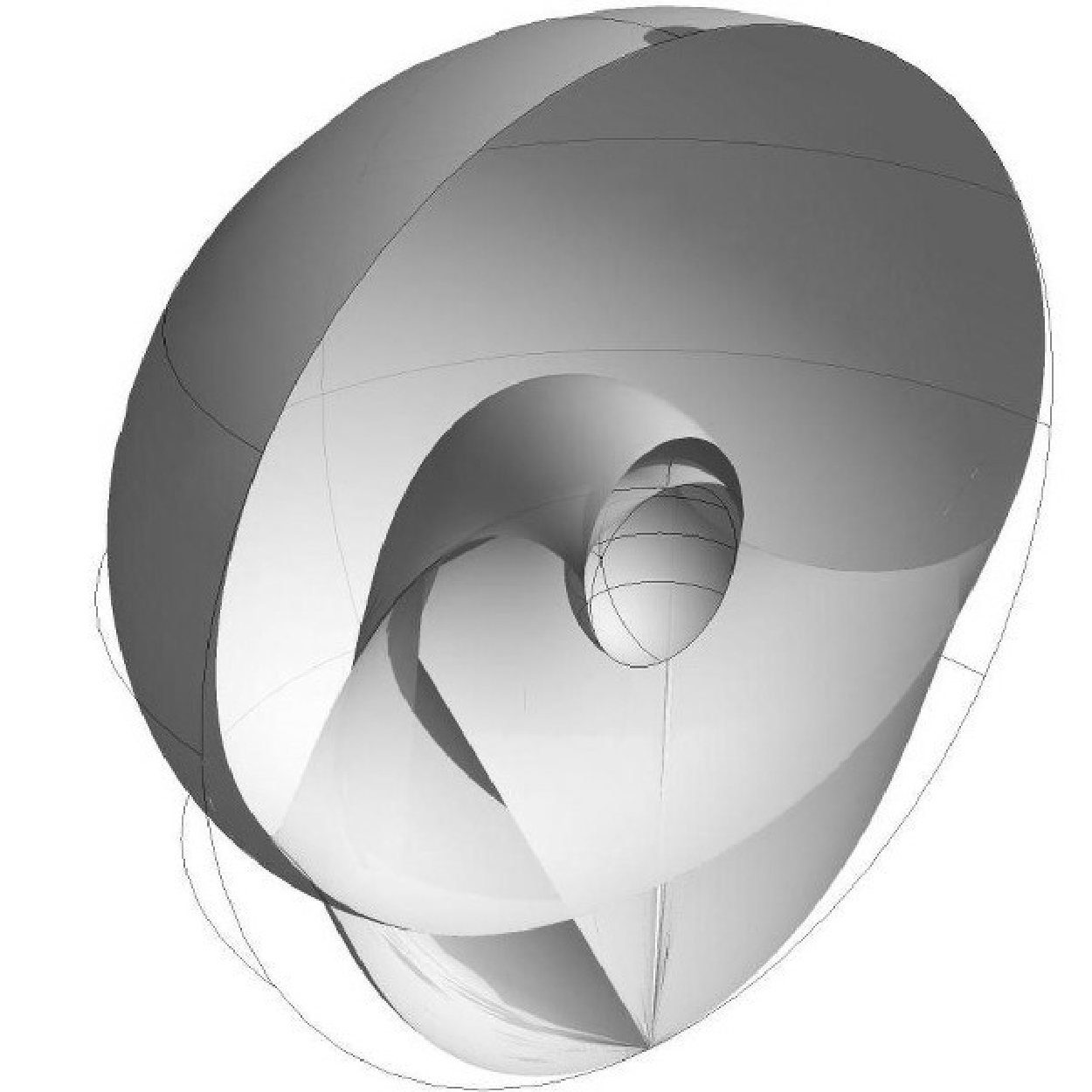} 
\ifx\undefined\distribute
\else
\hspace{0.5cm}
\fi
       \includegraphics[width=3.6cm]{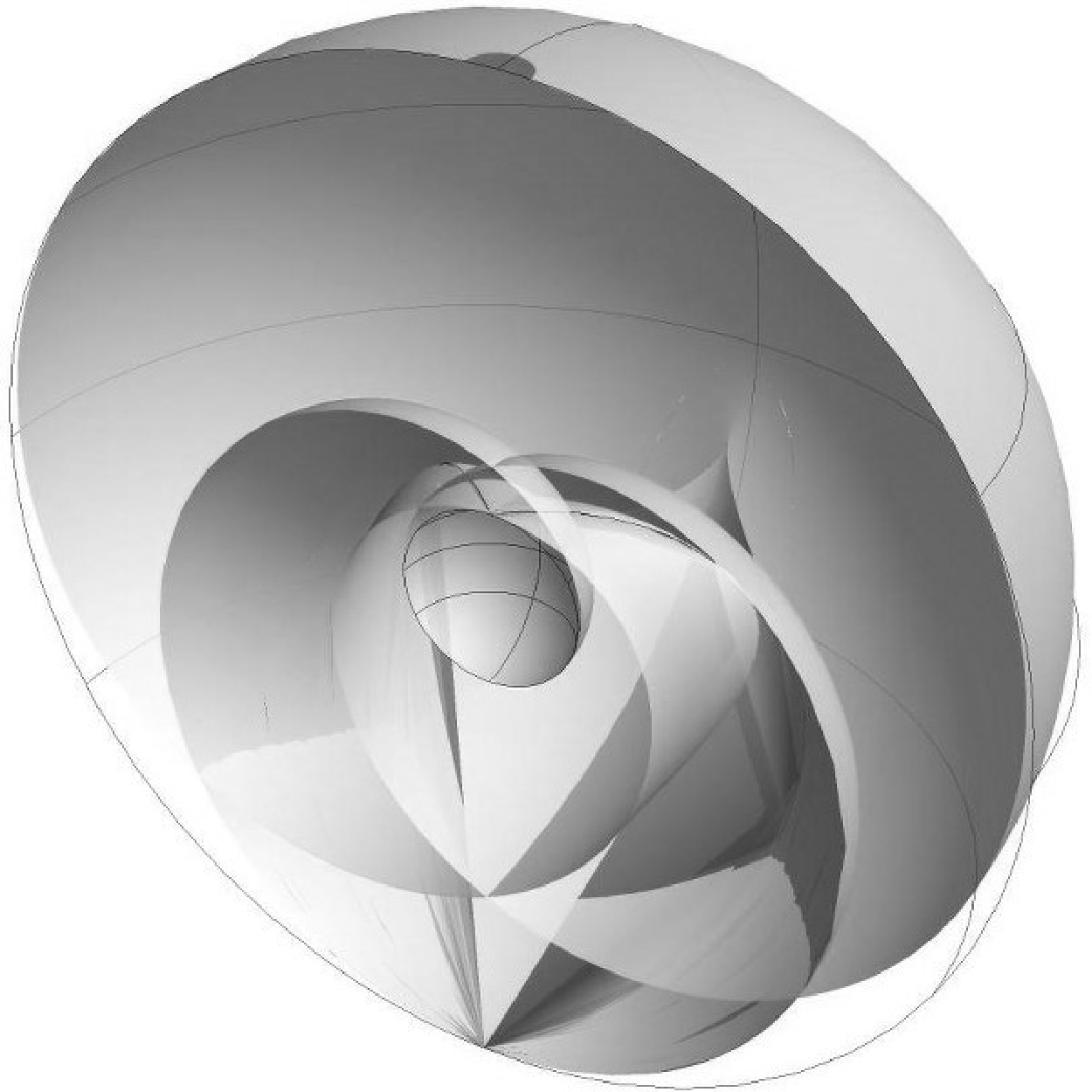} 
 \end{center}
 \caption{%
 The set $\CC^{\I}_2$ (left) and halves of it (center and right). 
 }%
\label{fig:typeI-C2}
\end{figure}
\ifx\undefined\distribute
\begin{figure}[htb]%
\else
\begin{figure}
\fi
 \begin{center}
       \includegraphics[width=3.6cm]{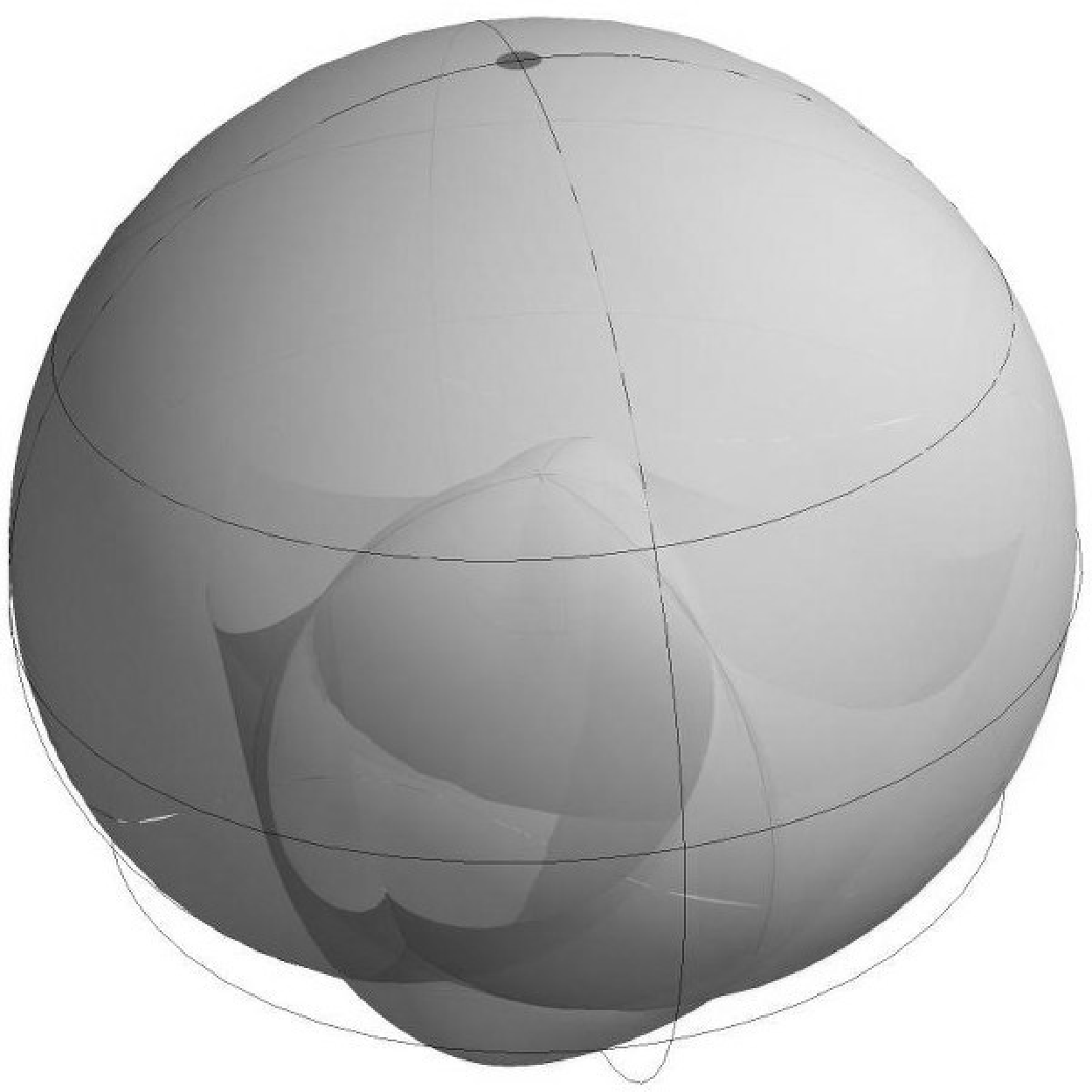} 
\ifx\undefined\distribute
\else
\hspace{0.5cm}
\fi
       \includegraphics[width=3.6cm]{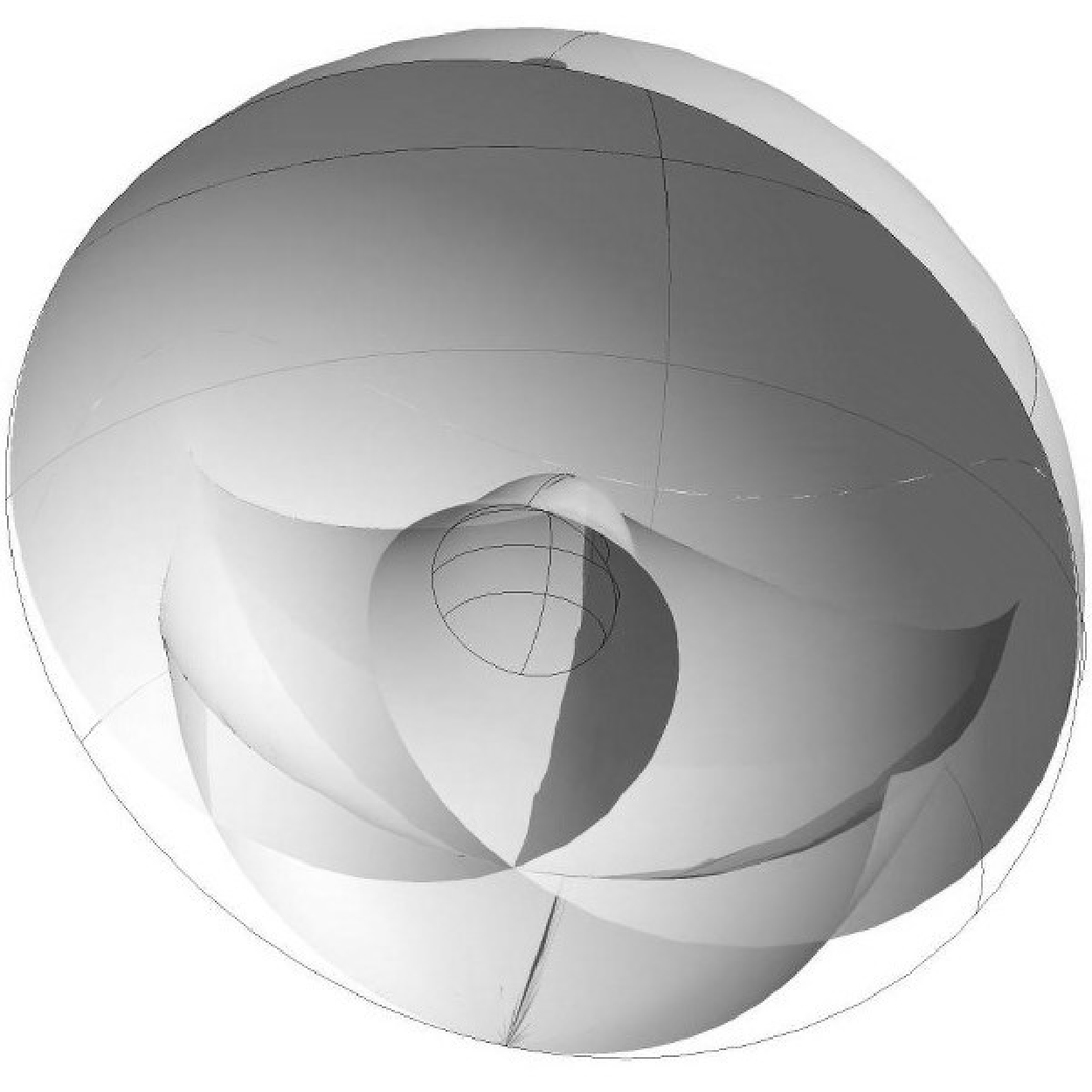} 
\ifx\undefined\distribute
\else
\hspace{0.5cm}
\fi
       \includegraphics[width=3.6cm]{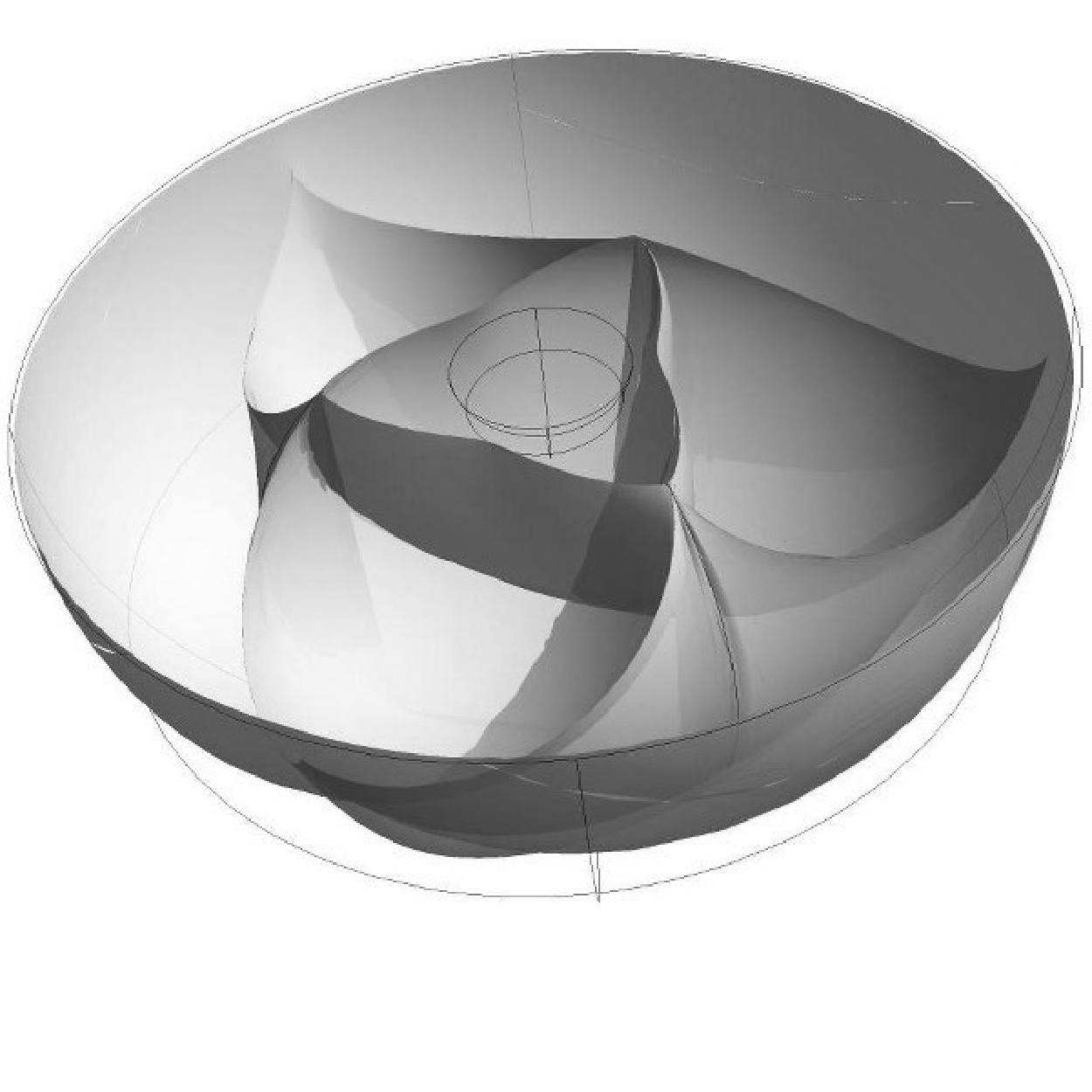} 
 \end{center}
 \caption{%
 The set $\CC^{\I}_3$ (left) and halves of it (center and right). 
 }%
\label{fig:typeI-C3}
\end{figure}
In Section~\ref{sec:type1},
we investigate the geometric properties of
$f^{\I}_m$, and show that the image of each $f^{\I}_m$
has an analytic extension whose image is
immersed outside of a compact set. 
See Figures\ \ref{fig:typeI-f2}, \ref{fig:typeI-C2}, \ref{fig:typeI-C3},
where
$f^{\I}_2$, $\CC^{\I}_2$ and
$\CC^{\I}_3$ are drawn in the stereographic 
hollow ball 
model \eqref{eq:hollow}. 
In Section~\ref{sec:type2}, we show that
each $\CC^{\II}_m$ can be realized as a warped product of
a certain trochoid and hyperbola.
In particular, $\CC^{\II}_2$ and $\CC^{\II}_3$
are almost embedded,
and $\CC^{\II}_{m}$ ($m\ge 4$) 
are almost immersed (cf.  Section 3). 
See Figures\ \ref{fig:h2}, \ref{fig:h3},
where
$f^{\II}_2$, $\CC^{\II}_2$ and
$\CC^{\II}_3$ are drawn in the stereographic 
hollow ball 
model \eqref{eq:hollow} as well.
\ifx\undefined\distribute
\begin{figure}[htb]%
\else
\begin{figure}
\fi
 \begin{center}
       \includegraphics[width=3.8cm]{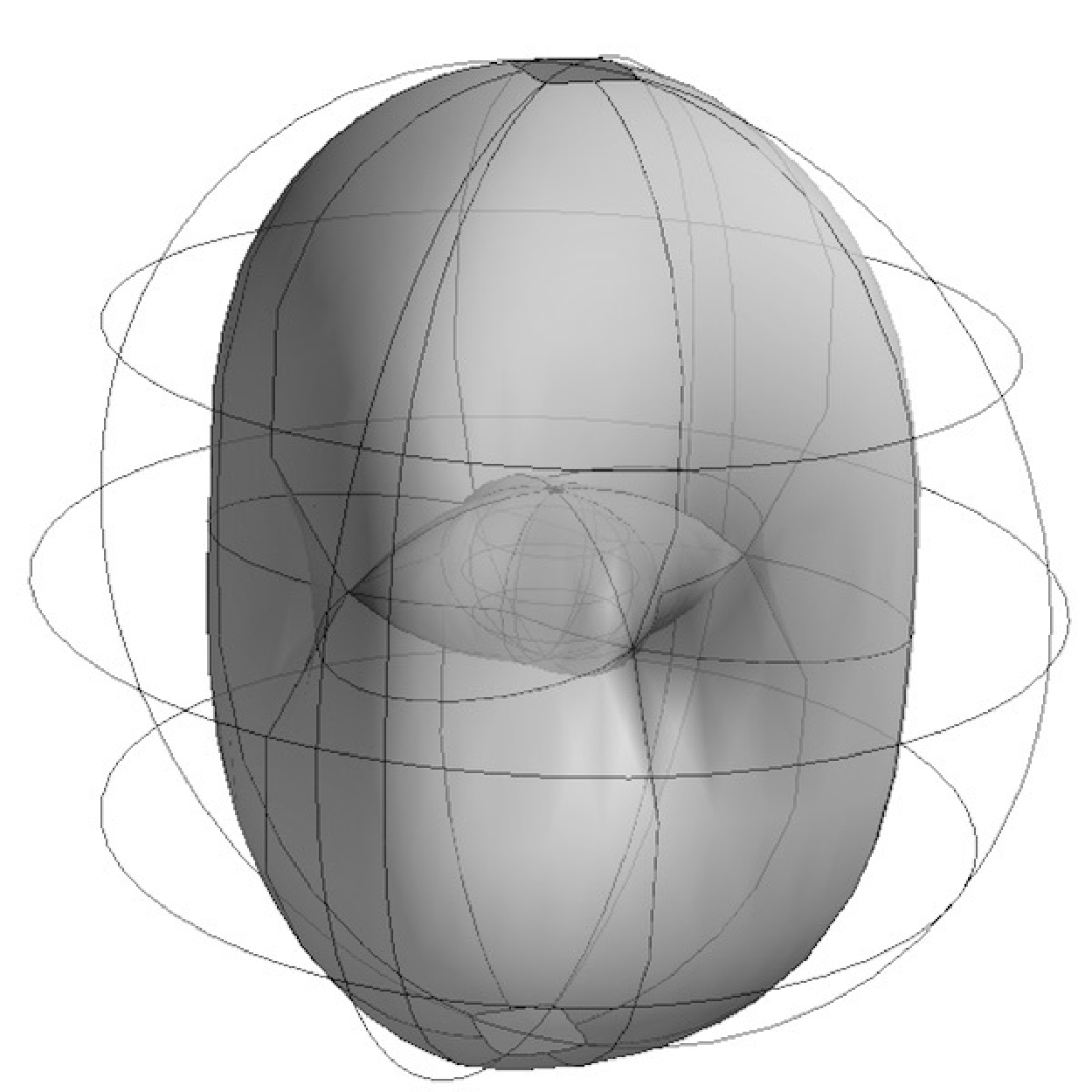} 
\ifx\undefined\distribute
\else
\hspace{1.5cm}
\fi
       \includegraphics[width=3.8cm]{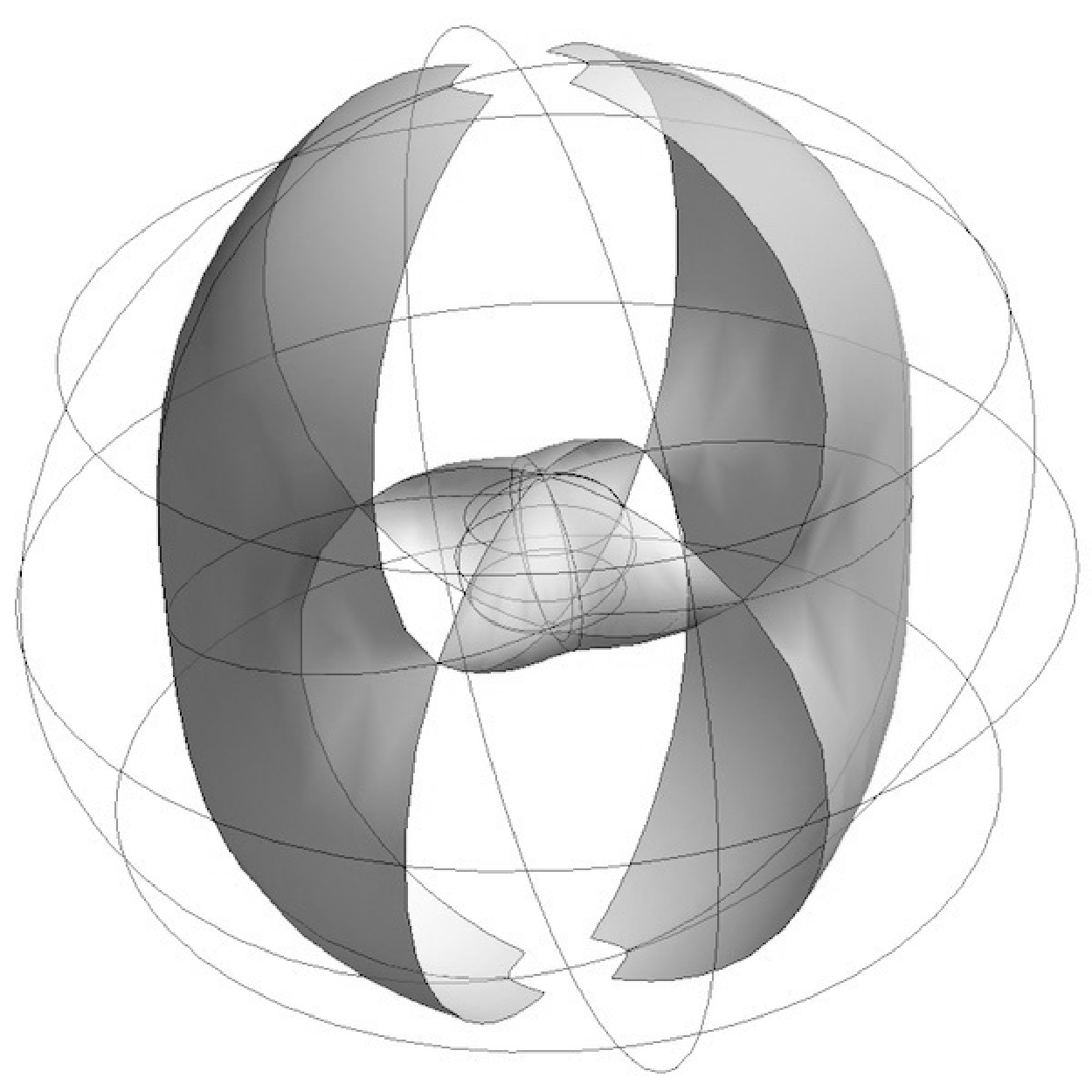} 
 \end{center}
 \caption{%
 The set $\CC^{\II}_2$ (left)
 and the image of $f^{\II}_2$ (right). 
 }%
\label{fig:h2}
\end{figure}
\ifx\undefined\distribute
\begin{figure}[thb]%
\else
\begin{figure}
\fi
 \begin{center}
       \includegraphics[width=3.8cm]{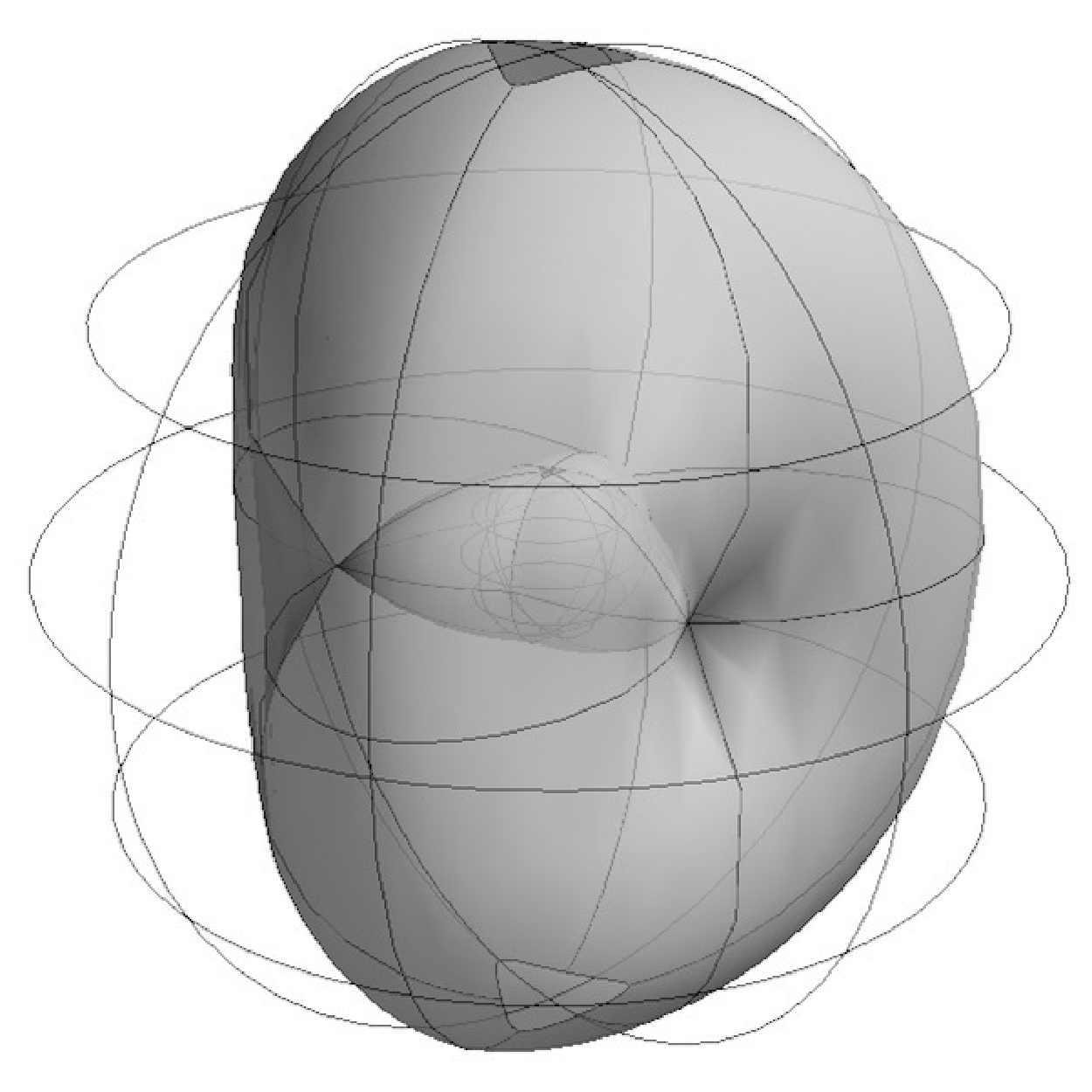} 
\ifx\undefined\distribute
\else
\hspace{1.5cm}
\fi
       \includegraphics[width=3.0cm]{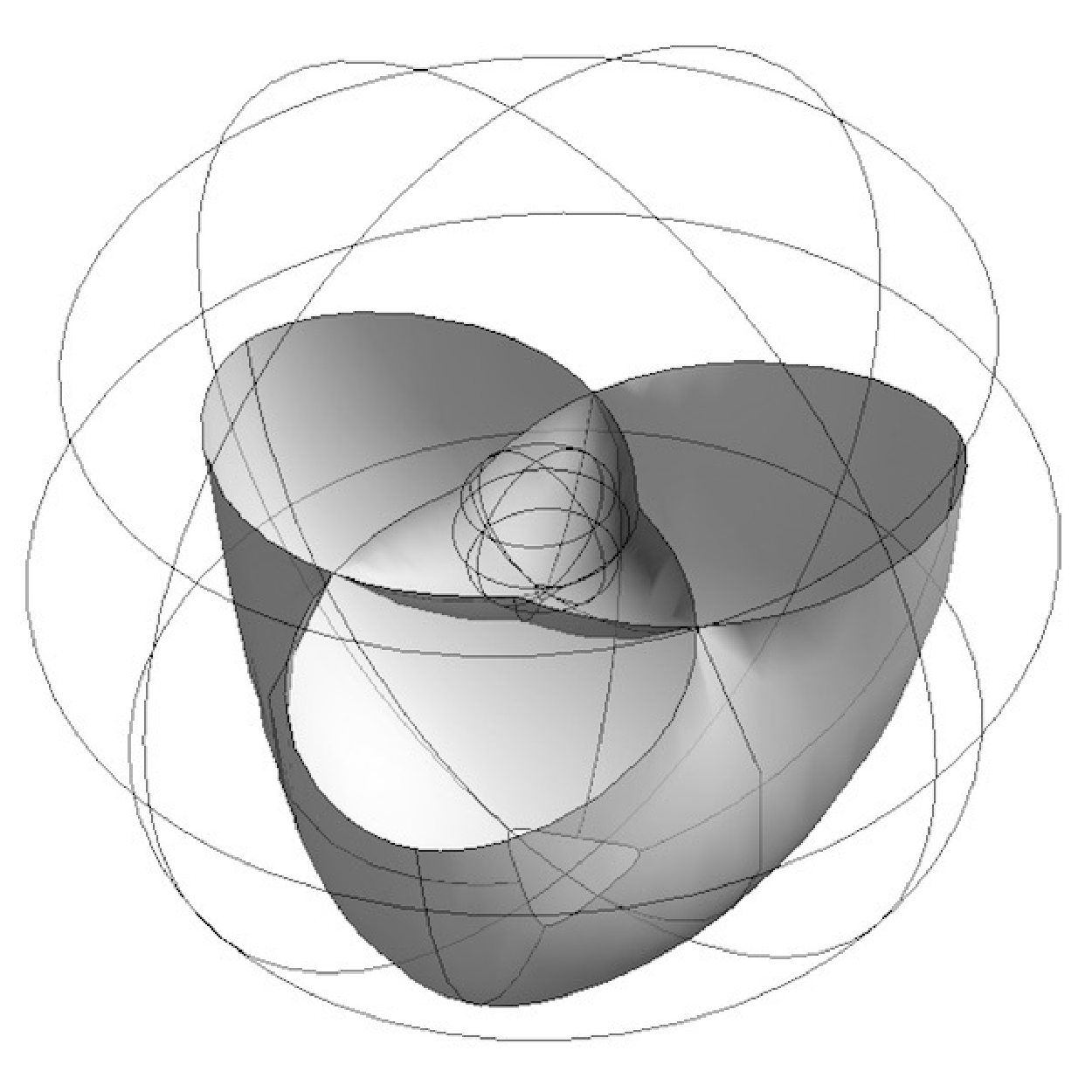} 
 \end{center}
 \caption{%
  The set $\CC^{\II}_3$ and half of it. 
 }%
\label{fig:h3}
\end{figure}

It is well-known that the de Sitter space $S^3_1$ 
can be compactified by including two
spheres $\partial_{\pm} S^3_{1}$.
These two sets $\partial_\pm S^3_{1}$
are called the \emph{ideal boundaries}.
In the stereographic hollow ball 
model, the relations 
\begin{equation}\label{eq:bdry}
   \partial_\pm S^3_{1}
      =\{\xi\in \R^3 \,;\, |\xi|=\sqrt{2}\mp 1\}
\end{equation}
hold (cf.~\eqref{eq:model}).
If a subset $\A$ of $S^3_1$ 
is closed, 
then each element of the set
\[
   \overline{\varPi(\A)} \cap \partial \D^3
       ~(\subset \partial_- S^3_{1}
         \cup \partial_+ S^3_{1})
\]
is called an \emph{endpoint},
where $\overline{\varPi(\A)}$ is the closure of 
$\varPi(\A)$ in $\R^3$.
Then the set 
$\overline{\varPi(\CC^{\J}_m)}\cap \partial_+ S^3_1$
consists of one (resp.\ two) point(s)
if $\J=\I$ and $m$ is odd
(resp.\ if $\J=\I$ and $m$ is even, or 
$\J=\II$).
On the other hand,
$\overline{\varPi(\CC^{\J}_m)}\cap \partial_- S^3_1$
always consists  of two points, that is,
the number of the endpoints of 
$\CC^{\J}_m$ ($\J=\I,\II$)
is three or four (cf.\ Theorems \ref{thm:A1} and \ref{thm:B}).
This is a remarkable phenomenon,
since other elliptic catenoids in $S^3_1$
do not have any analytic extensions
and have exactly two endpoints.

\section{Exceptional catenoids of type I.}
\label{sec:type1}

In this section, we show that
 $f^{\I}_m$
has an analytic extension.
For each integer $m\ge 2$,
we set 
\[
  f^{\I}_m(r,\theta)
   =\bigl(x_0(r,\theta),x_1(r,\theta),x_2(r,\theta),x_3(r,\theta)\bigr),
\]
with $z=r e^{\imag \theta}$ ($r>0$, $\theta\in S^1:=\R/2\pi\Z$).
Then
\begin{equation}\label{eq:1a}
\ifx\distribute\undefined
\mbox{\small
   $
\fi
    x_0 \pm x_3
    =\dfrac{m^2-1}{4m}r^{\pm 1}
    \left(2 \cos m\theta-\dfrac{m\mp 1}{m\pm 1}r^m\right),
\ifx\distribute\undefined
$,}
\fi
\end{equation}
\ifx\distribute\undefined
\begin{multline}
\label{eq:1b}
 x_1+\imag x_2
    =
 \mbox{\small$
    \dfrac{(m-1)^2}{4m} e^{\imag (m+1)\theta}
            +\dfrac{(m+1)^2}{4m} e^{-\imag (m-1)\theta}$}\\
   \mbox{\small$-e^{\imag\theta}\dfrac{m^2-1}{4m}r^m$}.
% \mbox{\small $=\dfrac{e^{\imag\theta}}{4m}
% \biggl(
%         (m-1)^2e^{\imag m\theta}$}\\
% \mbox{\small
%        +$(m+1)^2 e^{-\imag m\theta}
%           -(m^2-1)r^m \biggr)$.}
\end{multline}
\else
\begin{equation}
\label{eq:1b}
 x_1+\imag x_2
    =
    \dfrac{(m-1)^2}{4m} e^{\imag (m+1)\theta}
            +\dfrac{(m+1)^2}{4m} e^{-\imag (m-1)\theta}
   -e^{\imag\theta}\dfrac{m^2-1}{4m}r^m.
\end{equation}
\fi
We know that
$f^{\I}_m(r,\theta)$ has self-intersections, 
since it contains swallowtail singularities.
The limit curve
\begin{equation}\label{eq:star}
    \gamma_m(\theta):=\lim_{r\to 0}(x_1,x_2)
\end{equation}
gives a closed regular planar curve.

\ifx\undefined\distribute
\begin{figure}[htb]%
\else
\begin{figure}
\fi
 \begin{center}
\ifx\undefined\distribute
       \includegraphics[width=2.0cm]{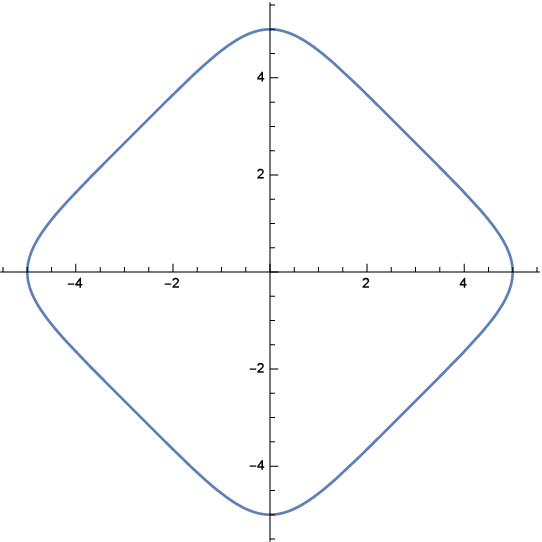} 
       \includegraphics[width=2.0cm]{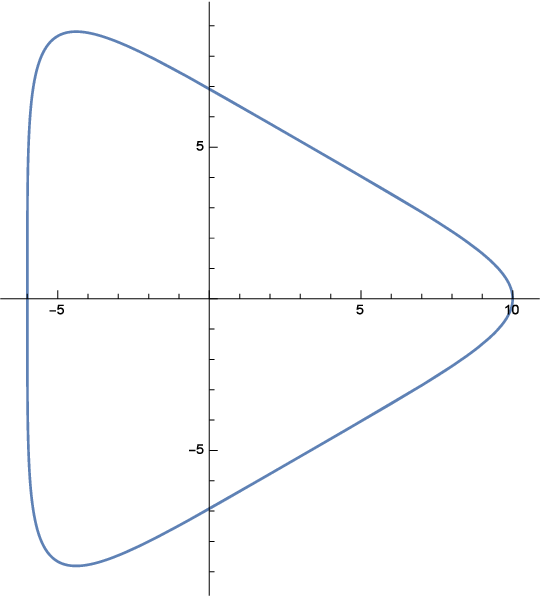} 
       \includegraphics[width=2.3cm]{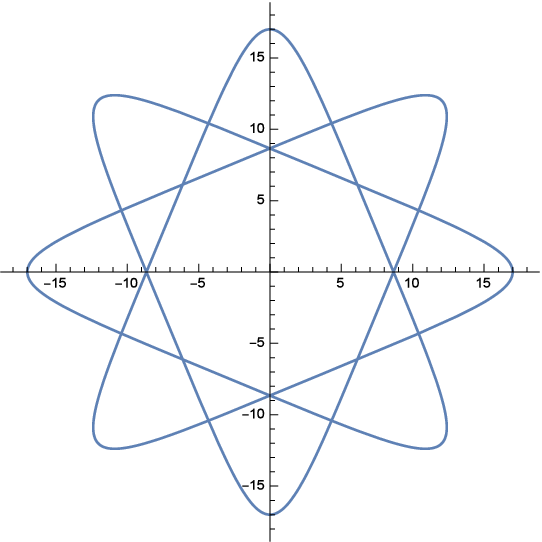} 
\else
       \includegraphics[width=0.2\textwidth]{tro2.eps} 
\hspace{1cm}
       \includegraphics[width=0.2\textwidth]{tro3.eps} 
\hspace{1cm}
       \includegraphics[width=0.23\textwidth]{tro4.eps} 
\fi
 \end{center}
 \caption{%
    The trochoids for $m=2,3,4$. 
 }%
\label{fig:tro}
\end{figure}

A \emph{hypo-trochoid} is a roulette traced by a point 
attached to a disk of radius $r_c$ rolling along 
the inside of a fixed circle of radius $r_m$, 
where the point is a distance $d$ from 
the center of the interior circle.
The parametrization of a hypo-trochoid is
given by
\ifx\distribute\undefined
\begin{align*}
     x(s) &= 
     \mbox{\small$  (r_c-r_m)\cos s + d \cos
               \left(\dfrac{r_c-r_m}{r_m}s\right)$},\\
     y(s) &= 
     \mbox{\small$(r_c-r_m)\sin s - d \sin 
                \left(\dfrac{r_c+r_m}{r_m}s\right)$}.
\end{align*}
\else
\begin{align*}
     x(s) &= 
    (r_c-r_m)\cos s + d \cos
               \left(\dfrac{r_c-r_m}{r_m}s\right),\\
     y(s) &= 
    (r_c-r_m)\sin s - d \sin 
                \left(\dfrac{r_c+r_m}{r_m}s\right).
\end{align*}
\fi
We prove the following:

\begin{prop}\label{prop:sing-end}
The plane curve $\gamma_m(\theta)$
has the following properties:
\begingroup
\renewcommand{\theenumi}{(\alph{enumi})}
\renewcommand{\labelenumi}{(\alph{enumi})}
\begin{enumerate}
\item\label{item:S0:1} 
      $\gamma_m(\theta+\pi)=(-1)^{m+1}\gamma_m(\theta)$
      for $\theta\in \R$,
\item\label{item:S0:2}
      the image of $\gamma_m$ is
      a convex curve if $m=2,3$,
\item\label{item:S0:3} $\gamma_m$ is a hypo-trochoid with
      (cf.\ Figure~\ref{fig:tro})
\ifx\distribute\undefined{\small
\fi
\[
r_c = \frac{m-1}{2},\quad r_m = \frac{m^2-1}{4m},
\quad d=\frac{(m+1)^2}{4m}.
\]
\ifx\distribute\undefined
}
\fi
\end{enumerate}
\endgroup
\end{prop}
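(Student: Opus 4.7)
The plan is to extract an explicit formula for $\gamma_m(\theta)$ by taking $r \to 0$ in~\eqref{eq:1b} and identifying the point $(x_1,x_2)$ with the complex number $x_1 + \imag x_2$. This yields
\[
\gamma_m(\theta) = \frac{(m-1)^2}{4m}\, e^{\imag(m+1)\theta} + \frac{(m+1)^2}{4m}\, e^{-\imag(m-1)\theta}.
\]
Part \ref{item:S0:1} then follows by inspection: replacing $\theta$ with $\theta+\pi$ multiplies the first exponential by $(-1)^{m+1}$ and the second by $(-1)^{m-1} = (-1)^{m+1}$, so both terms receive the common factor $(-1)^{m+1}$. For part \ref{item:S0:3}, I would introduce the reparametrization $s := (m+1)\theta$, so that $(m-1)\theta = \tfrac{m-1}{m+1}s$, then separate real and imaginary parts. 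With the stated parameters one verifies $r_c - r_m = (m-1)^2/(4m)$ and $(r_c - r_m)/r_m = (m-1)/(m+1)$, matching the hypo-trochoid coefficients and frequency.

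For part \ref{item:S0:2}, the strategy is to combine a pointwise curvature computation with a rotation-index count. A direct differentiation gives
\[
\gamma_m'(\theta) = \frac{\imag(m^2-1)}{2m}\, e^{\imag\theta}\bigl(-\cos m\theta + \imag m\sin m\theta\bigr),
\]
which never vanishes. Expanding $\overline{\gamma_m'}\gamma_m''$ and taking $\op{Im}$ collapses (after the telescoping of $e^{\pm 2\imag m\theta}$ terms) to the compact form $-\frac{(m^2-1)^3}{4m^2}\cos^2 m\theta$, so the signed curvature is non-positive everywhere with only isolated zeros, for every $m\ge 2$. The rotation index of $\gamma_m$ is then read off from the factorization above: the elliptic factor $-\cos m\theta + \imag m\sin m\theta$ sweeps the ellipse $u^2 + v^2/m^2 = 1$ clockwise $m$ times (at $(-1,0)$ it moves into the second quadrant as $\theta$ increases), while $e^{\imag\theta}$ winds once counter-clockwise, so the rotation index over $[0,2\pi)$ equals $1-m$.

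For $m=2$ the primitive period is $2\pi$ and the rotation index is $-1$. For $m=3$, part \ref{item:S0:1} gives period $\pi$, over which the rotation index is $-1$. In both cases the classical theorem of Hopf (a regular closed plane curve whose signed curvature has constant sign and whose rotation index is $\pm 1$ is a simple convex closed curve) applies and yields \ref{item:S0:2}. The main obstacle I anticipate is the careful bookkeeping inside $\op{Im}(\overline{\gamma_m'}\gamma_m'')$ and the correct identification of the orientation of the elliptic factor, so as to arrive at rotation index $1-m$ rather than, say, $1+m$. For $m\ge 4$ the same calculations give $|1-m|\ge 2$ on the primitive period, so the curve fails to be simple and its image fails to be convex—consistent with \ref{item:S0:2} being restricted to $m=2,3$.
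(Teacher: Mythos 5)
Your proposal is correct, and for parts (a) and (c) it follows exactly the route the paper takes: the paper's entire proof consists of writing out the real and imaginary parts of $\gamma_m(\theta)=\tfrac{(m-1)^2}{4m}e^{\imag(m+1)\theta}+\tfrac{(m+1)^2}{4m}e^{-\imag(m-1)\theta}$ and matching them against the hypo-trochoid parametrization, with (a) and (b) declared to ``follow immediately.'' Your verification of the parameters ($r_c-r_m=(m-1)^2/(4m)$, $(r_c-r_m)/r_m=(m-1)/(m+1)$, $d=(m+1)^2/(4m)$ under $s=(m+1)\theta$) is the computation the paper leaves implicit; note in passing that the paper's displayed hypo-trochoid formula carries the frequency $\tfrac{r_c+r_m}{r_m}$ in the $y$-component, which is inconsistent with the $x$-component and with the explicit expressions for $x_1,x_2$ — you correctly used the standard form with the same frequency $\tfrac{r_c-r_m}{r_m}$ in both components. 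The genuine added value of your write-up is part (b), for which the paper supplies no argument at all: your factorization $\gamma_m'=\tfrac{\imag(m^2-1)}{2m}e^{\imag\theta}(-\cos m\theta+\imag m\sin m\theta)$, the identity $\operatorname{Im}(\overline{\gamma_m'}\gamma_m'')=-\tfrac{(m^2-1)^3}{4m^2}\cos^2 m\theta$, and the rotation-index count $1-m$ (hence $-1$ on the primitive period $2\pi$ for $m=2$ and $\pi$ for $m=3$) all check out, and the classical fact that a closed regular curve with one-signed curvature and rotation index $\pm1$ bounds a convex region then gives convexity; the same count shows index of absolute value at least $2$ on the primitive period for $m\ge4$, explaining why the restriction to $m=2,3$ is sharp.
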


\begin{proof}
The first two assertions
follow immediately.
The last assertion
follows from the expressions
\ifx\distribute\undefined
{\small
\fi
 \begin{align*}
    x_1 &= 
\frac{(m-1)^2\cos(m+1)\theta+(m+1)^2\cos(m-1)\theta}
{4m},\\
    x_2 &= 
\frac{(m-1)^2\sin(m+1)\theta-(m+1)^2\sin(m-1)\theta}{4m}.\qedhere
 \end{align*}
\ifx\distribute\undefined
}
\fi
\renewcommand{\qed}{\relax}
\end{proof}
We set
$\Omega:=\Omega^{+}\cup\Omega^{-}$,
where
\[
  \Omega^{\pm}:=\{(r,\theta)\in\R\times S^1\,;\,\pm r>0\}
   \quad\bigl(S^1:=\R/2\pi\Z\bigr).
\]
The expressions \eqref{eq:1a} and \eqref{eq:1b} are 
meaningful 
for $r<0$ as well, and $f_m^{\I}$ can be extended to $\Omega$.
We denote this extension by
$\tilde f_m^{\I}\colon{}\Omega \to S^3_1$.
If $m$ is odd, then
\begin{equation}\label{eq:mob-sym}
\tilde f_m^{\I}(-r,\theta+\pi)=\tilde f_m^{\I}(r,\theta).
\end{equation}
In particular, if $m$ is odd,
the image of $f_m^{\I}$ coincides
with that of $\tilde f_m^{\I}$.
On the other hand, if $m$ is even,  
\[
   \tilde f_m^{\I}(-r,\theta)=\iota\circ \tilde f^{\I}_m(r,\theta),
\]
where $\iota$ is the isometric involution given by
\begin{equation}\label{eq:involution}
 \iota\colon{}S^3_1\ni (t,x,y,z)\mapsto (-t,x,y,-z)\in S^3_1.
\end{equation}
Thus, if $m$ is even,
$f^{\I}_m(\Omega^{+})$
and  
$f^{\I}_m(\Omega^{-})$ 
are  congruent, but do not coincide with each other.
The singular set of $\tilde f_m^{\I}$ is
$\Sigma_m:=\Sigma_m^+\cup\Sigma_m^{-}$,
where 
\[
  \Sigma_m^{\pm}:=\{(r,\theta)\in\Omega^{\pm}\,;\,r^m+2 \cos m\theta=0\},
\]
each of which consists of $m$ components.
The image of each component of  the
singular set is a curve with singularities 
which is bounded in $S^3_1$, whose endpoints are
\begin{equation}\label{eq:Pk}
  P_k:=\bigl(0,\gamma_m(\alpha_k),0\bigr), 
  \qquad \alpha_k:=\frac{2k+1}{2m}\pi
  \qquad (k=0,\dots,2m-1).
\end{equation}
We denote by $A_m^{\pm}$ the domain in $\Omega^{\pm}$
containing a neighborhood of $r=\pm\infty$,
and $B_m^{\pm}:=\Omega^{\pm}\setminus \overline{A_m^{\pm}}$.
Then we have the expressions
\begin{align}
   A^{\pm}_m&=
   \{(r,\theta)\in\Omega^{\pm}\,;\,
                   \epsilon^m(r^m+2\cos m\theta)>0\},
   \label{eq:Am}\\
   B^{\pm}_m&=\{(r,\theta)\in\Omega^\pm\,;\,
                    \epsilon^m(r^m+2\cos m\theta)<0\},
   \label{eq:Bm}
\end{align}
where $\epsilon$ is the sign of $r$
(cf.\ Figure\ \ref{fig:domain}).
\begin{figure}
 \centering
 \begin{tabular}{c}
\ifx\distribute\undefined
 \includegraphics[width=0.35\textwidth]{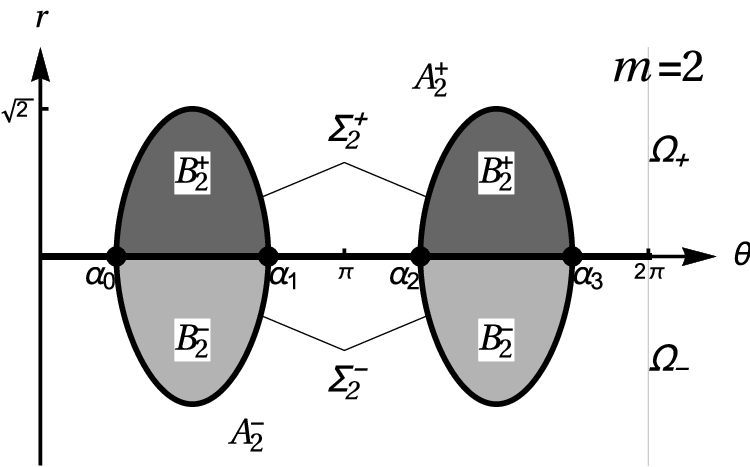}\\
 \includegraphics[width=0.35\textwidth]{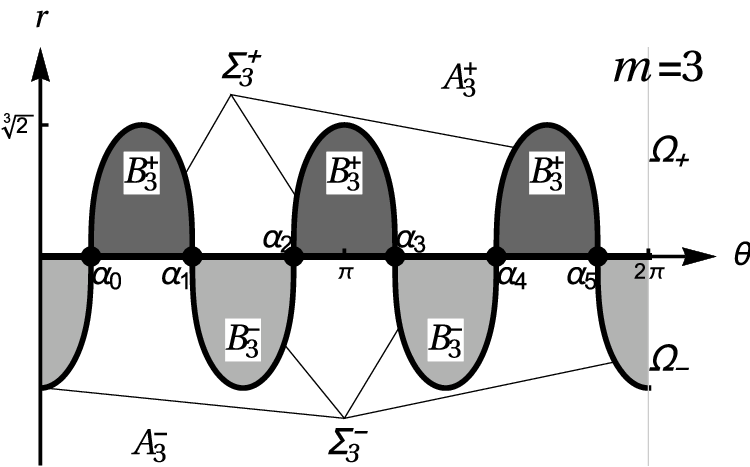}\\
\else
 \includegraphics[width=0.45\textwidth]{region-1.eps}\hspace{1cm}
 \includegraphics[width=0.45\textwidth]{region-2.eps}\\
\fi
 \end{tabular}
 \caption{The domains of $\tilde f_m^{\I}$ and their singular sets.}
 \label{fig:domain}
\end{figure}
We next consider the light-like lines
\[
  L_k:=
    \{(t,\gamma_m(\alpha_k),-t)\,;\, t\in \R\}\subset S^3_1
\]
passing through $P_k$ 
for $k=0,1,\dots,2m-1$,
and set
\[
  \CC_m^{\I}:= \tilde f_m^{\I}(\Omega)\cup L_0\cup\dots \cup L_{2m-1}.
\]
Then $\CC_m^{\I}$ is the analytic extension of $f_m^{\I}$.
In fact,
\begin{thm}\label{thm:A}
For each integer $m\geq 2$,
\begingroup
\renewcommand{\theenumi}{(\roman{enumi})}
\renewcommand{\labelenumi}{(\roman{enumi})}
 \begin{enumerate}
  \item\label{item:A:1} 
        $\CC^{\I}_m$ is a closed set of $S^3_1$.
        In particular, 
        if $m$ is odd, then
	$\CC^{\I}_m$ is the closure of the
	image of  $f^{\I}_m$. 
	On the other hand, if $m$ is even,
	then the closure of the image of $f^{\I}_m$
	is just half of $\CC^{\I}_m$.
	The other half can be obtained by 
	the isometric involution $\iota$ of $S^3_1$
	given in \eqref{eq:involution}.
  \item\label{item:A:2} 
 Moreover, $\CC^{\I}_m$ is analytically immersed outside 
        the compact set consisting of 
        the image of $\Sigma_m$,
        and the points $\{(0,\gamma_m(\alpha_k),0)\,;
\,k=0,\dots,2m-1\}$.
 \end{enumerate}
\endgroup
\end{thm}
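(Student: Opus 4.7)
The plan is to combine a careful limit analysis of $\tilde f^{\I}_m$ near the boundary of $\Omega$ with an explicit analytic reparametrization across each light-like line $L_k$, and then to derive (i) from these two ingredients and verify (ii) by a direct rank computation. Three boundary regimes of $\Omega$ arise from \eqref{eq:1a}--\eqref{eq:1b}: (A) $|r|\to\infty$, (B) $r\to 0$ with $\cos m\theta$ bounded away from zero, and (C) $r\to 0$ with $\theta\to\alpha_k$. In (A) the components $x_0\pm x_3$ and $x_1+\imag x_2$ diverge as polynomials in $r$, while in (B) the factor $r^{-1}$ in $x_0-x_3$ forces divergence; hence only regime (C) can produce limits inside $S^3_1$.

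Near each zero $\alpha_k$ of $\cos m\theta$ I introduce the variable $s:=\cos(m\theta)/r$, keeping $r$ as second coordinate. Since $\sin(m\alpha_k)=(-1)^k\ne 0$, the relation $\cos m\theta=sr$ defines $\theta$ analytically as a function of $(s,r)$ on a neighborhood of each $(s_0,0)$. Substituting into \eqref{eq:1a} gives
\[
x_0+x_3=\tfrac{m^2-1}{4m}\bigl(2sr^{2}-\tfrac{m-1}{m+1}r^{m+1}\bigr),\quad
x_0-x_3=\tfrac{m^2-1}{4m}\bigl(2s-\tfrac{m+1}{m-1}r^{m-1}\bigr),
\]
both polynomial in $(s,r)$, while $(x_1,x_2)$ is analytic in $(s,r)$ through $\theta$. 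At $r=0$ these reduce to $(x_0+x_3,x_0-x_3,x_1,x_2)=(0,\tfrac{m^2-1}{2m}s,\gamma_m(\alpha_k))$, i.e.\ to $L_k$ parametrized by $t=\tfrac{m^2-1}{4m}s$. This supplies the analytic extension of $\tilde f^{\I}_m$ across each $L_k$.

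For (i), take $q_n\in\CC^{\I}_m$ with $q_n\to q\in S^3_1$. After passing to subsequences, either all $q_n$ lie on a single $L_k$ (and then $q\in L_k\subset\CC^{\I}_m$), or $q_n=\tilde f^{\I}_m(r_n,\theta_n)$. In the latter case, since $q\in S^3_1$ regimes (A) and (B) are excluded; the remaining options are either $(r_n,\theta_n)$ has a convergent subsequence in $\Omega$ (then $q\in\tilde f^{\I}_m(\Omega)\subset\CC^{\I}_m$), or $r_n\to 0$ with $\theta_n\to\alpha_k$ for some $k$. In the last sub-case the chart above forces $s_n=\cos(m\theta_n)/r_n$ to stay bounded (else $x_0-x_3\to\pm\infty$), whence $q\in L_k$. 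This proves closedness. If $m$ is odd, \eqref{eq:mob-sym} yields $\tilde f^{\I}_m(\Omega)=f^{\I}_m(\Omega^+)$, so $\CC^{\I}_m=\overline{f^{\I}_m(\Omega^+)}$. If $m$ is even, the identity $\tilde f^{\I}_m(-r,\theta)=\iota\circ\tilde f^{\I}_m(r,\theta)$ together with the immediate equality $\iota(L_k)=L_k$ presents $\CC^{\I}_m$ as the union of $\overline{f^{\I}_m(\Omega^+)}$ with its image under $\iota$, the two halves meeting along $L_0\cup\cdots\cup L_{2m-1}$.

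For (ii), on $\Omega\setminus\Sigma_m$ the map $\tilde f^{\I}_m$ is an immersion by the very definition of $\Sigma_m$. Near a point of $L_k\setminus\{P_k\}$, corresponding to $(s_0,0)$ with $s_0\ne 0$, the displayed formulas show that $\partial_s$ at $r=0$ points along the light-like direction $(1,0,0,-1)$ of $L_k$, whereas $\partial_r$ at $r=0$ acquires a $(x_1,x_2)$-component proportional to $s_0\,\gamma_m'(\alpha_k)$, which is nonzero because the closed curve $\gamma_m$ is regular. Hence $\partial_s$ and $\partial_r$ are linearly independent in $T_q S^3_1$, yielding the immersion. At $P_k$ (i.e.\ $s_0=0$) this transverse component vanishes and $\partial_r$ becomes collinear with $\partial_s$, so the immersion property genuinely fails, consistent with the exclusion in the statement. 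The excluded set is compact since $\Sigma_m$ lies in $\{|r|\le 2^{1/m}\}$ and the $P_k$'s are finite. The main expected obstacle is the coordinate change near each $L_k$: identifying the scaling $s=\cos(m\theta)/r$ that polynomializes $\tilde f^{\I}_m$, and checking linear independence of the partial derivatives uniformly along $L_k\setminus\{P_k\}$; once the polynomial identities in $(s,r)$ are in hand, both analyticity of the extension and the rank claim follow routinely.
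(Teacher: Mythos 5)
Your proposal is correct and follows essentially the same route as the paper: the same boundary-regime analysis of \eqref{eq:1a}--\eqref{eq:1b} (divergence of $x_0\pm x_3$ unless $r\to 0$ with $\theta\to\alpha_k$ and $\cos(m\theta)/r$ bounded) for closedness, the symmetry \eqref{eq:mob-sym} and the involution $\iota$ for the odd/even dichotomy, and the same substitution $s=\cos(m\theta)/r$ to obtain the analytic chart across each $L_k$ and verify the immersion on $L_k\setminus\{P_k\}$ via $\partial_r(x_1+\imag x_2)|_{(0,s)}\neq0$ for $s\neq0$. Your explicit check of the linear independence of $\partial_s$ and $\partial_r$ merely spells out what the paper leaves as ``one can easily check.''
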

\begin{proof}
By \eqref{eq:1a}, $x_0(r,\theta)$ diverges for $r\to \pm\infty$.
Take a sequence $\{\zeta_j=(r_j,\theta_j)\}_{j=1,2,\dots}$
on $\Omega$ such that $\displaystyle\lim_{j\to\infty}r_j=0$.
Taking a subsequence if necessary, we may assume $\{\zeta_j\}$
is included in $\Omega^+$ or $\Omega^-$, and 
$\displaystyle\lim_{j\to\infty}\theta_j=\beta$.
If $\cos m\beta\neq 0$, 
\eqref{eq:1a} implies that $\displaystyle\lim_{j\to\infty} x_0(r_j,\theta_j)$
diverges.
On the other hand, if $\cos m\beta=0$, that is, $\beta=\alpha_k$
for some $k$,  then 
$\displaystyle\lim_{j\to\infty}(x_0(\zeta_j)+x_3(\zeta_j))$ tends to
$0$, that is,
$\tilde f^{\I}_m(\zeta_j)$ is asymptotic to the line $L_k$.
Conversely, for each point $Q_{k,t}:=(t,\gamma_m(\alpha_k),-t)\in L_k$,
we set 
\[
   \zeta_j := \left(\frac{1}{j}, \frac{1}{m}\cos^{-1}\frac{4mt}{j(m^2-1)}
             \right)\quad(j=1,2,\dots),
\]
where $\cos^{-1}$ is considered as a map
\begin{equation}\label{eq:cos-inv}
   \cos^{-1}\colon{}(-1,1)\to
    \left(m\alpha_k-\frac{\pi}{2},m\alpha_k+\frac{\pi}{2}\right).
\end{equation}
Then $\displaystyle\lim_{j\to\infty}\zeta_j=(0,\alpha_k)$
and $\displaystyle\lim_{j\to\infty}\tilde f^{\I}_m(\zeta_j)=Q_{k,t}$, 
where $\alpha_k$ is as in \eqref{eq:Pk}.
Hence $\CC^{\I}_m$ is the closure of the image of $\tilde f_m^{\I}$,
proving the first part of \ref{item:A:1}.
The second part of \ref{item:A:1} is already proven.
We next prove (ii). Since $\tilde f^{\I}_m$ is an analytic 
immersion on $\Omega\setminus\Sigma_m$,
it is sufficient to show that 
$\CC^{\I}_m$ is parametrized analytically on a
neighborhood of $L_k$, which gives an immersion
on $L_k\setminus \{P_k\}$.
For this purpose, we set
$s:=(\cos m\theta)/r$.
Then the $x_j$ ($j=0,1,2,3$) have the following expressions:
\ifx\undefined\distribute
\begin{align*}
 x_0\pm x_3 &= 
     \text{\small$
            \dfrac{m^2-1}{4m}r^{\pm 1}
            \left(2rs-\dfrac{m\mp 1}{m\pm 1}r^{m}\right),
$}
\\
 x_1+\imag x_2&=
     \text{\small$
\dfrac{e^{\imag \cos^{-1}(sr)/m}}{4m}
           \biggl(
                (m-1)^2e^{\imag \cos^{-1}(sr)}
$}
\\
     &\hphantom{===}
     \text{\small$
+(m+1)^2 e^{-\imag \cos^{-1}(sr)}
                   -(m^2-1)r^m \biggr).
$}
\end{align*}
\else
\begin{align*}
 x_0\pm x_3 &= 
            \dfrac{m^2-1}{4m}r^{\pm 1}
            \left(2rs-\dfrac{m\mp 1}{m\pm 1}r^{m}\right),
\\
 x_1+\imag x_2&=
            \dfrac{e^{\imag \cos^{-1}(sr)/m}}{4m}
           \biggl(
                (m-1)^2e^{\imag \cos^{-1}(sr)}
                 +(m+1)^2 e^{-\imag \cos^{-1}(sr)}
                 (m^2-1)r^m \biggr).
\end{align*}
\fi
Since 
$\left.\bigl(\partial(x_1+\imag x_2)/\partial r\bigr)\right|_{(0,s)}\neq 0$
if $s\neq 0$, 
one can easily check that $\tilde f^{\I}_m(r,s)$ 
is an immersion at $(0,s)$ for each $s\in \R\setminus \{0\}$,
which proves the assertion.
\end{proof}
Next, we consider the endpoints of $\CC_m^{\I}$.
Let
\begin{equation}\label{eq:E}
\begin{aligned}
  p_{\pm}&:=\bigl(0,0,\pm(\sqrt{2}-1)\bigr)\in \partial_{+}S^3_1,\\
  n_{\pm}&:=\bigl(0,0,\pm(\sqrt{2}+1)\bigr)\in \partial_{-}S^3_1,
\end{aligned}
\end{equation}
where $\partial_{\pm} S^3_1$ are
the ideal boundaries 
given in \eqref{eq:bdry}.
We set
\begin{equation}\label{eq:y0}
   y:=(y_1,y_2,y_3):=\varPi \circ f^{\I}_m
         = \frac{1}{\delta}(x_1,x_2,x_3),
\end{equation}
where $\delta=x_0+\sqrt{2x_0^2+1}$ (cf.\ \eqref{eq:hollow}).
\begin{thm}\label{thm:A1}
   If $m$ is even {\rm(}resp.\ odd{\rm)},
   the set of endpoints of $\CC^{\I}_m$ is
   $\{p_{\pm},n_{\pm}\}$ 
   {\rm(}resp.\ $\{p_{-},n_{\pm}\}${\rm)}.
   More precisely, let $\{\zeta_j=(r_j,\theta_j)\}$
   be a sequence in $\Omega$ whose image under $\tilde f^{\I}_m$
   is unbounded.
   Then the following cases occur{\rm:}
   \begin{enumerate}
    \item\label{item:A1:1} 
$\displaystyle\lim_{j\to\infty}y(\zeta_j)=n_-$ holds
when $\displaystyle\lim_{j\to\infty}r_j=+\infty$
$($that is, $\{\zeta_j\}$ lies in $\Omega^+$ and
diverges$)$.
    \item\label{item:A1:2}  
	  When $\displaystyle\lim_{j\to\infty}r_j=-\infty$,
that is, if $\{\zeta_j\}$ lies in $\Omega^-$ and
diverges, then 	  $\displaystyle\lim_{j\to\infty}y(\zeta_j)$ is
	  $p_+$ {\rm(}resp.\ $n_-${\rm)}
	  if $m$ is even {\rm(}resp.\ odd{\rm)}.
    \item\label{item:A1:3} 
	  When $r_j\to 0$ and $\{\zeta_j\}$ is contained in 
	  $A_m^+$ {\rm(}resp.\ $A_m^-$, $B_m^+$, $B_m^-${\rm)},
	  the limit of $y(\zeta_j)$ is obtained as in the following 
	  table{\rm:}
	  \[
	    \begin{array}{|c||c|c|c|c|}
	     \hline
	     \text{The domain containing $\{\zeta_j\}$}
                & A_m^+ & A_m^- & B_m^+ & B_m^- \\
	     \hline
	      \displaystyle\lim_{j\to\infty}y(\zeta_j) \text{ for even $m$}
	        & p_- & n_+ & n_+ & p_-\\
             \hline
	      \displaystyle\lim_{j\to\infty}y(\zeta_j) \text{ for odd $m$}
	        & p_- & p_- & n_+ & n_+\\
	     \hline
	    \end{array}
	  \]
   \end{enumerate}
\end{thm}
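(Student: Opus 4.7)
The plan is to read the limit of $y = \varPi \circ \tilde f^{\I}_m = \delta^{-1}(x_1,x_2,x_3)$ directly from the explicit expressions \eqref{eq:1a} and \eqref{eq:1b}, together with the elementary asymptotics
\[
   \delta = x_0 + \sqrt{2x_0^2+1}
    \sim\begin{cases}(1+\sqrt{2})\,x_0 & (x_0\to +\infty),\\ (\sqrt{2}-1)\,|x_0| & (x_0\to -\infty).\end{cases}
\]
Whenever $|x_{1,2}|=o(|x_0|)$, the limit point therefore lies on $\partial_+ S^3_1$ or $\partial_- S^3_1$ according to the sign of $x_0$, and its third coordinate $y_3$ is recovered from $\lim (x_3/x_0)$. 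One checks from \eqref{eq:1b} that $x_1,x_2$ stay of smaller order than $x_0$ in each of the regimes below, so only the $x_0\pm x_3$ terms of \eqref{eq:1a} matter.

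For case \ref{item:A1:1}, the dominant terms of \eqref{eq:1a} as $r\to +\infty$ are $x_0\pm x_3 \sim -\frac{(m\mp 1)^2}{4m}r^{m\pm 1}$, so $x_0 \to -\infty$ and $x_3/x_0\to 1$, yielding $\lim y = (0,0,-(\sqrt{2}+1)) = n_-$. For case \ref{item:A1:2}, substituting $r = -s$ with $s\to +\infty$ the same expansions pick up a factor of $(-1)^m$, so that $x_0 \to +\infty$ and $x_3/x_0 \to 1$ when $m$ is even (giving $p_+$), while $x_0 \to -\infty$ when $m$ is odd (giving $n_-$, as is also immediate from the symmetry \eqref{eq:mob-sym}).

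For case \ref{item:A1:3}, with $r_j\to 0$, the leading term of \eqref{eq:1a} is the $r^{-1}$ piece of $x_0-x_3$, giving
\[
   x_0\sim -x_3 \sim \frac{m^2-1}{4m\,r}\cos m\theta,
\]
while \eqref{eq:1b} forces $x_1,x_2$ to remain bounded (with $(x_1,x_2)\to \gamma_m(\theta)$). Hence $\sgn(x_0)=\sgn((\cos m\theta)/r)$ and $x_3/x_0\to -1$, so the limit is $p_-$ when $x_0\to +\infty$ and $n_+$ when $x_0 \to -\infty$. It remains to translate the defining inequalities \eqref{eq:Am}--\eqref{eq:Bm} for $A_m^\pm,B_m^\pm$ into sign conditions on $\cos m\theta$: the hypothesis that the image is unbounded forces $|\cos m\theta|\gg r^m$ along the sequence, so the sign of $\epsilon^m(r^m+2\cos m\theta)$ reduces to that of $\epsilon^m\cos m\theta$, and inserting this into the six sub-cases produces the table exactly as stated. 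Sequences with $\cos m\beta = 0$ but $\cos m\theta_j/r_j$ bounded are asymptotic to the light-like lines $L_k$ by the proof of Theorem~\ref{thm:A}\ref{item:A:1}, so their limits in $S^3_1$ are finite and contribute no new limit points on $\partial\D^3$.

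The main obstacle is bookkeeping in case \ref{item:A1:3}: disentangling $\sgn(r)$, the parity of $m$, and the sign of $\cos m\theta$ so that the six sub-cases collapse to the two or four predicted limit points. Cases \ref{item:A1:1} and \ref{item:A1:2} are then immediate consequences of the same asymptotic dichotomy for $\delta$.
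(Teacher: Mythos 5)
Your proposal is correct and follows essentially the same route as the paper: in each regime one computes $\sgn(x_0)$ and $\lim x_3/x_0$ from \eqref{eq:1a}, notes $x_1,x_2$ are of lower order, and reads off the limit from the asymptotics of $\delta=x_0+\sqrt{2x_0^2+1}$ (the paper packages this as $y_l=\frac{x_l/x_0}{1+\sgn(x_0)\sqrt{2+1/x_0^2}}$), with the sign bookkeeping in case \ref{item:A1:3} reduced, exactly as in the paper's treatment of $B_m^-$, to the observation that unboundedness forces $r^m+2\cos m\theta$ to have the sign of $\cos m\theta$. The only quibble is the phrase ``pick up a factor of $(-1)^m$'' in case \ref{item:A1:2} (the leading coefficient of $x_0$ in terms of $s=-r$ is $(-1)^m\frac{(m-1)^2}{4m}$, i.e.\ the sign relative to the $r\to+\infty$ expansion flips by $(-1)^{m+1}$), but the conclusion you draw from it is the correct one.
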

\begin{proof}
 We rewrite \eqref{eq:y0}  as 
 \begin{equation}\label{eq:y}	
   y_l      = \frac{x_l/x_0}{1+\sgn(x_0)\sqrt{2+1/(x_0)^2}}
    \quad (l=1,2,3),
 \end{equation}
 where $\sgn(x_0)$ denotes the sign of $x_0$.
 By \eqref{eq:1a} and \eqref{eq:1b},
 \begin{align*}
 &\lim_{r\to \pm\infty} \frac{x_3}{x_0}=1,\quad\qquad
 \lim_{r\to \pm\infty} \frac{x_l}{x_0}=0 \quad (l=1,2),\\
 &\lim_{r\to +\infty} x_0= -\infty,\qquad
 \lim_{r\to -\infty} (-1)^mx_0= \infty,
 \end{align*}
 proving \eqref{item:A1:1} and \eqref{item:A1:2}.

We prove \eqref{item:A1:3} for the case that $\{\zeta_j\}\subset B_m^-$.
 Noticing that $r_j<0$, \eqref{eq:Bm} implies that
 \[
    (-1)^m \left(r_j^{m-1} +\frac{\cos m\theta_j}{r_j}\right)>0
 \]
holds for each $j$.
Since $\{x_0(\zeta_j)\}$ is unbounded,
 so is $(\cos m\theta_j)/r_j$.
 Then the sign
of $\cos m\theta_j/r_j$
is equal to
$(-1)^m$ for sufficiently large $j$
 because $r_j$ tends to $0$.
 Then by \eqref{eq:1a}, $\sgn\bigl(x_0(\zeta_j)\bigr)=(-1)^m$.
 On the other hand, \eqref{eq:1a} implies that
 $\displaystyle\lim_{j\to\infty}x_3(\zeta_j)/x_0(\zeta_j)=-1$.
 Thus, we have
 \[
   \lim_{j\to\infty}y_3(\zeta_j)
    = \frac{-1}{1+(-1)^m\sqrt{2}}
    = 1-(-1)^{m}\sqrt{2}.
 \]
 Since $x_1$ and $x_2$ are bounded near $r=0$, 
 $y_l(\zeta_j)$ tends to $0$ for $l=1$, $2$.
 Thus we have the conclusion.
 The other cases can be proved similarly.
 \end{proof}

\section{Exceptional catenoids of type II.}
\label{sec:type2}
Here we show that the image of the
exceptional catenoid $f^{\II}_m$
in $S^3_1$ has an analytic extension.
For each integer $m\ge 2$,
we set 
\[
  f^{\II}_m(r,\theta)
    =\bigl(
          x_0(r,\theta),
	  x_1(r,\theta),
	  x_2(r,\theta),
	  x_3(r,\theta)
	  \bigr),
\]
with
$z=re^{\imag \theta}$ ($r>0\,\,,\theta\in [0,2\pi)$).
By \eqref{eq:ec},
$f^{\II}_m$'s components are
\begin{equation}\label{eq:parametrization}
 \begin{aligned}
  x_0 &= 
\ifx\undefined\distribute
    \text{\small$
\fi
  \dfrac{1-m^2}{4m}\left(r+\frac{1}{r}\right)\cos m\theta
\ifx\undefined\distribute
$}
\fi,\\
  x_3 &= 
\ifx\undefined\distribute
    \text{\small$
\fi
  \dfrac{1-m^2}{4m}\left(r-\frac{1}{r}\right)\cos m\theta
\ifx\undefined\distribute
$}
\fi,\\
  x_1 &= 
%    \text{\small$\frac{-1}{2m}\biggl(
%                       (m^2+1)\cos m\theta\cos\theta +
%                          2m  \sin m\theta\sin\theta
%  
%                    \biggr)$},\\
\ifx\undefined\distribute
    \text{\small$
\fi
   -\frac{
                       (m^2+1)\cos m\theta\cos\theta +
                          2m  \sin m\theta\sin\theta}{2m}
\ifx\undefined\distribute
$}
\fi,\\
%                      \biggr)$},\\
%
  x_2 &= 
\ifx\undefined\distribute
    \text{\small$
\fi
  -\frac{
                       (m^2+1)\cos m\theta\sin\theta -
                          2m  \sin m\theta\cos\theta}{2m}
\ifx\undefined\distribute
$}
\fi,
%    \text{\small$\frac{-1}{2m}\biggl(
%                       (m^2+1)\cos m\theta\sin\theta -
%                          2m  \sin m\theta\cos\theta
%                      \biggr)$},
 \end{aligned}
\end{equation}
where $z=re^{\imag \theta}$ ($r>0,\,\,\theta\in [0,2\pi)$).
The secondary Gauss map $g_m$
of $f^{\II}_m$ is
a meromorphic function on $\C\cup\{\infty\}$
given by (cf.\ \cite[(39)]{FKKRUY2})
\[
   g_m = (z^m-1)/(z^m+1).
\]
Since the singular set $\Sigma_m$ of the map 
$f^{\II}_m$ is
\[
 \Sigma_m=\{z\in \C\setminus\{0\}\,;\, |g_m(z)|=1\}
 =\{r e^{\imag\theta}\in \C\setminus\{0\}\,;\, \cos m\theta=0\},
\]
we have 
 $\Sigma_m = \sigma_0\cup \sigma_1\cup\dots\cup\sigma_{2m-1}$,
where
\begin{equation}\label{eq:sigma-j}
   \sigma_k:=\left\{z=re^{\imag \alpha_k}\,;\,   r>0
        \right\}\quad
	\left(\alpha_k := \tfrac{(2k+1)\pi}{2m}\right)
\end{equation}
for $k=0,\dots,2m-1$.
In particular, if we set
\begin{equation}\label{eq:regular-set}
  \Omega_k:=\left\{re^{\imag \theta}\,;\,
	      \tfrac{(2k-1)\pi}{2m}
	      < \theta< \tfrac{(2k+1)\pi}{2m},
	      r>0 
        \right\},
\end{equation}
then the union of the $\Omega_k$ ($k=0,\dots,2m-1$)
is the regular set of $f^{\II}_m$, that is,
the regular set consists of a disjoint union of $2m$ sectors.

\begin{prop}\label{prop:sing-end-2}
 The map $f^{\II}_m$ satisfies{\rm:}
\begingroup
\renewcommand{\theenumi}{(\roman{enumi})}
\renewcommand{\labelenumi}{(\roman{enumi})}
 \begin{enumerate}
  \item\label{item:S:1}
       For each $m\ge 2$,
       the image $f^{\II}_m(\sigma_k)$ 
       consists of a point.
       More precisely,
       \[
	  f^{\II}_m(\sigma_k)=
	         (-1)^{k}\left(0,-\sin \alpha_k,
                  \cos \alpha_k,
                      0\right),
       \]
       where $\alpha_k$ is as in \eqref{eq:sigma-j}
       $(k=0,\dots,2m-1)$.
  \item\label{item:S:2} 
The endpoints of the image of $f^{\II}_m$ are 
        the four points 
        $p_{\pm}$ and $n_{\pm}$  as in 
        \eqref{eq:E}.
 \end{enumerate}
\endgroup
\end{prop}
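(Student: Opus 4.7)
Part~(i) is a direct substitution. Writing $\theta=\alpha_k=(2k+1)\pi/(2m)$, we have $m\alpha_k=(2k+1)\pi/2$, so $\cos m\alpha_k=0$ and $\sin m\alpha_k=(-1)^k$. Substituting into \eqref{eq:parametrization}, the common factor $\cos m\theta$ forces both $x_0$ and $x_3$ to vanish identically on $\sigma_k$, while the surviving terms in $x_1$ and $x_2$ become $r$-independent and collapse to $(-1)^{k+1}\sin\alpha_k$ and $(-1)^k\cos\alpha_k$, giving the stated formula after factoring out $(-1)^k$.

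For part~(ii), I would follow the template of the proof of Theorem~\ref{thm:A1}, rewriting the stereographic hollowball coordinates as
\[
 y_l=\frac{x_l/x_0}{1+\sgn(x_0)\sqrt{2+1/(x_0)^2}}\qquad(l=1,2,3),
\]
and analysing $\lim y(\zeta_j)$ along any sequence whose image is unbounded. From \eqref{eq:parametrization} one reads off
\[
 \frac{x_3}{x_0}=\frac{r-1/r}{r+1/r},
\]
while $x_1$ and $x_2$ depend only on $\theta$ and are bounded. Consequently, whenever $|x_0|\to\infty$, we have $x_1/x_0, x_2/x_0\to 0$, and $x_3/x_0\to +1$ or $-1$ according as $r\to\infty$ or $r\to 0$. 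Moreover, since $(1-m^2)/(4m)<0$ for $m\ge 2$, the sign of $x_0$ is the opposite of $\sgn(\cos m\theta)$. The four combinations (two signs of $x_0$ times two limiting values of $x_3/x_0$) then yield $y_3\to\pm 1/(1\pm\sqrt{2})$, giving precisely the four limit points $p_\pm, n_\pm$.

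To finish (ii), it remains to rule out any further limit points of $\CC^{\II}_m$. Because $x_1, x_2$ are bounded and $x_0, x_3$ are controlled by the factor $\cos m\theta \cdot (r\pm 1/r)$, the image of $\zeta_j$ can fail to be bounded in $S^3_1$ only if $|x_0(\zeta_j)|\to\infty$, which forces the sequence to approach $r=0$ or $r=\infty$. If instead $r_j$ stays in a compact subinterval of $(0,\infty)$ and $\cos m\theta_j\to 0$, the sequence converges to one of the finite singular-image points identified in part~(i), contributing no point to $\partial \D^3$.

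The main technical obstacle is the case analysis in~(ii): one must simultaneously track the sign of $x_0$ together with the limiting value of $x_3/x_0$ and exclude mixed-limit scenarios in which $r$ and $\cos m\theta$ conspire. The clean separation in \eqref{eq:parametrization}---$(x_0,x_3)$ absorbing all the $r$-divergence through $\cos m\theta\cdot(r\pm 1/r)$, while $(x_1,x_2)$ remain $r$-independent---is precisely what makes the bookkeeping tractable and pins down the limit set to exactly $\{p_\pm,n_\pm\}$.
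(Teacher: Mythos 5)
Your proposal is correct and follows essentially the same route as the paper: part (i) by direct substitution using $\cos m\alpha_k=0$, $\sin m\alpha_k=(-1)^k$, and part (ii) via the hollowball formula for $y_l$, the boundedness of $x_1,x_2$, the sign of $x_0$ determined by $\cos m\theta$, and $x_3/x_0\to\pm1$ as $r\to\infty$ or $r\to 0$. The only difference is cosmetic: the paper organizes the case analysis by restricting to a single sector $\Omega_k$ after passing to a subsequence, whereas you phrase the same bookkeeping in terms of the sign combinations directly.
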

 
\begin{proof}
 Substituting $\theta=\alpha_k$ into
 \eqref{eq:parametrization}
 and using  that
 $\cos m\theta=0$ and $\sin m\theta=(-1)^k$
 on $\sigma_k$, we get the first assertion.

 To prove the second assertion,
 we remark that
       \begin{equation}\label{eq:signx0}
          \sgn(x_0) = (-1)^{k+1}\qquad 
	   \text{(on $\Omega_k$)},
       \end{equation}
 for each $k$,
 since $\sgn(\cos m\theta)=(-1)^{k}$.
 Take a sequence $\{z_j\}$ on $\C\setminus\{0\}$
 such that $\varPi\circ f^{\II}_m(z_j)$ converges to
 one of the points in the ideal boundary.
 By \ref{item:S:1}, we may assume that each $z_j\not\in\Sigma_m$.
 With finitely many sectors,
 we may also assume $\{z_j\}\subset \Omega_k$
 for some $k$.
 Then $x_0(z_j)$ diverges to $\infty$ or $-\infty$ 
 as $j\to\infty$,
 that is,
 $\{r_j+r_j^{-1}\}_{j=1,2,\dots}$ is unbounded, where $r_j:=|z_j|$.
 Taking a subsequence, we may assume 
 \begin{equation}\label{eq:r-lim}
   \lim_{j\to\infty} r_j= 0 \quad \mbox{or}\quad
   \lim_{j\to\infty} r_j= \infty.
 \end{equation}
%\yam{
% In this case, 
% $\displaystyle\lim_{n\to\infty} \delta(z_n) = \infty$,
% where $\delta=x_0+\sqrt{2x_0^2+1}$.}
 We set $y:=\varPi\circ f^{\II}_m$.
 Since $x_1$ and $x_2$ are bounded 
 (cf.\ \eqref{eq:parametrization}), 
 $y_l(z_j)\to 0$ for $l=1,2$, where $y=(y_1,y_2,y_3)$.
 On the other hand, by \eqref{eq:parametrization} and \eqref{eq:r-lim},
 we have
 $\displaystyle\lim_{j\to\infty}\bigl(x_3(z_j)/x_0(z_j)\bigr)=\pm 1$.
 Thus, we have
 $\displaystyle\lim_{j\to\infty} y_3(z_j) = \pm\sqrt{2}\pm 1$,
 which proves  \ref{item:S:2}.
\end{proof}

It should be remarked that
$x_1$, $x_2$ depend only on
the variable $\theta$, and 
\ifx\undefined\distribute
$
\else
\[
\fi
\bigl(x_1(\theta),x_2(\theta)\bigr)=-2\gamma_m(\theta)
\ifx\undefined\distribute
$
\else
\]
\fi
holds.
Here, $\gamma_m$ is exactly the same
hypo-trochoid as given 
in Proposition~\ref{prop:sing-end}.
For fixed $\theta$, the image of the curve
defined by
$r \mapsto \bigl(x_0(r,\theta),x_3(r,\theta)\bigr)$
coincides with
\ifx\distribute\undefined
\begin{multline}\label{eq:hyp0}
 \biggl\{(t,z)\in \R^2_1\,;\,
   t^2-z^2=\frac{(m^2-1)^2}{(2m)^2}\cos^2 m\theta,
 \\ 
 \sgn(\cos m \theta)t<0
 \biggr\}.
\end{multline}
\else
\begin{equation}\label{eq:hyp0}
 \biggl\{(t,z)\in \R^2_1\,;\,
   t^2-z^2=\frac{(m^2-1)^2}{(2m)^2}\cos^2 m\theta,
 \sgn(\cos m \theta)t<0
 \biggr\}.
\end{equation}
\fi
In particular, it is half of
a hyperbola when $\cos m\theta\neq 0$.
If $\cos m\theta=0$, the image 
reduces to a point.
So we can conclude that
the real analytic extension of the image of
$f^{\II}_m$ coincides with the set
\begin{multline*}
 \CC^{\II}_m:=
 \biggl\{
     (t,x,y,z)\in \R^4_1\,;\,
     (x,y)=-2\gamma_m(\theta),\\
     t^2-z^2
 =\frac{(m^2-1)^2}{(2m)^2}\cos^2 m\theta,
 \,\,     \theta\in [0,2\pi) 
\biggr\}.
\end{multline*}
For each $k=0,\dots,2m-1$, the analytic extension
$\CC^{\II}_m$ 
contains a union of two light-like lines
\[
   L^{\pm}_k:=\{(t,-\sin \alpha_k,\cos
    \alpha_k,\pm t)\,;\, t \in \R\},
\]
where $\alpha_k$ is as in \eqref{eq:sigma-j}.
Moreover, $\CC^{\II}_m$ is symmetric with respect to the isometric
involution
\ifx\distribute\undefined
\begin{multline*}
 S^3_1 \ni
 (t,x,y,z)\mapsto
 (t, 
 \cos(2\alpha_k)x+\sin(2\alpha_k)y,\\
 \sin(2\alpha_k)x-\cos(2\alpha_k)y, z)\in S^3_1.
\end{multline*}
\else
\[
 S^3_1 \ni
 (t,x,y,z)\longmapsto
 (t, 
 \cos(2\alpha_k)x+\sin(2\alpha_k)y,
 \sin(2\alpha_k)x-\cos(2\alpha_k)y, z)\in S^3_1.
\]
\fi
This involution fixes the two lines
$L^{+}_k$ and $L^{-}_k$.
Suppose that $m$ is an odd integer.
By \ref{item:S0:1} of Proposition~\ref{prop:sing-end},
$\gamma_m$ is $\pi$-periodic.
In this case, one half of the hyperbola
at $\theta+\pi$ is just the other half
of the hyperbola 
\eqref{eq:hyp0}
at $\theta$, and 
$\CC^{\II}_m$ coincides with the closure
of the image of $f^{\II}_m$.

In the case
$m$ is even,
$\CC^{\II}_m$ 
does not coincide with the closure
of the image of $f^{\II}_m$.
Moreover, $\CC^{\II}_m$ contains 
the image of the map $\iota\circ f^{\II}_m$,
which is
congruent to $f^{\II}_m$,
where $\iota$ is the involution as in \eqref{eq:involution},
and $\CC^{\II}_m$ is just the closure
of the union of the images of $f^{\II}_m$
and $\tilde f^{\II}_m$.
Figure~\ref{fig:h2}
shows $\CC^{\II}_m$
and
the image of $f^{\II}_m$ for $m=2$.

Summarizing the above,
we get the following:

\begin{thm}\label{thm:B}
 For each $m=2,3,\dots$,
 the set $\CC^{\II}_m$
 gives the real analytic extension of
 the exceptional catenoid
 $f^{\II}_m$, and 
 has the following properties:
\begingroup
\renewcommand{\theenumi}{(\roman{enumi})}
\renewcommand{\labelenumi}{(\roman{enumi})}
 \begin{enumerate}
  \item\label{item:B:1} 
       The projection of 
       $\CC^{\II}_m$
       into the $xy$-plane in $\R^4_1$
       is the hypo-trochoid $-2\gamma_m$.
       Furthermore,
       the section of $\CC^{\II}_m$ by
       a plane containing 
       a point of the hypo-trochoid and
       perpendicular to the $xy$-plane
       is a hyperbola
       unless 
       the plane passes
       through the cone-like singularity 
       of $\CC^{\II}_m$.
  \item\label{item:B:2} 
       $\CC^{\II}_m$ is almost immersed
       and has four endpoints. Two of them
       lie in $\partial_+ S^3_1$ and the others
       lie in $\partial_- S^3_1$.
       Moreover,
       $\CC^{\II}_m$ is almost embedded
       if $m=2,3$.
  \item If $m$ is odd, then
	$\CC^{\II}_m$ is the closure of the
	image of  $f^{\II}_m$. 
	On the other hand, if $m$ is even,
	then the closure of the image of 
	$f^{\II}_m$
	is just half of $\CC^{\II}_m$.
	The other half can be obtained by 
	the isometric involution of $S^3_1$
	given in \eqref{eq:involution}.
 \end{enumerate}
\endgroup
\end{thm}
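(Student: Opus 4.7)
The plan is to read off the theorem from the explicit parametrization \eqref{eq:parametrization} together with Propositions \ref{prop:sing-end} and \ref{prop:sing-end-2}, which already contain the bulk of the content. The key structural observation is that in \eqref{eq:parametrization} the pair $(x_1,x_2)$ depends only on $\theta$ and equals $-2\gamma_m(\theta)$, while $(x_0,x_3)$ depends on $(r,\theta)$ through the identity
\[
 x_0^2-x_3^2=\frac{(m^2-1)^2}{(2m)^2}\cos^2 m\theta,\qquad
 \sgn(x_0)=-\sgn(\cos m\theta),
\]
so that for fixed $\theta$ with $\cos m\theta\neq 0$ the fiber is exactly one half of the hyperbola \eqref{eq:hyp0}. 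Passing to the real-analytic extension $\CC^{\II}_m$ means closing up both halves of each such hyperbola, which gives assertion \ref{item:B:1}.

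For \ref{item:B:2}, the four limit points together with their distribution in $\partial_\pm S^3_1$ are exactly Proposition \ref{prop:sing-end-2}\ref{item:S:2}. To see that $\CC^{\II}_m$ is almost immersed, I would note that $f^{\II}_m$ is already an immersion on each of the $2m$ regular sectors $\Omega_k$, and the singular rays $\sigma_k$ collapse to the $2m$ cone points listed in Proposition \ref{prop:sing-end-2}\ref{item:S:1}; the hyperbolic fibers glue neighboring sectors analytically across these cone points while remaining smooth as $r$ crosses $1$, so the complement of the discrete cone-point set is immersed. For almost-embeddedness at $m=2,3$ I would use Proposition \ref{prop:sing-end}\ref{item:S0:2}: $\gamma_m$ is then a convex closed curve (traversed once modulo the $\pi$-symmetry of \ref{item:S0:1}), so the $xy$-projection of $\CC^{\II}_m$ restricted to non-cone points is injective, and distinct non-cone points have disjoint hyperbolic fibers because each fiber lies in the vertical plane above a single hypo-trochoid footprint.

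For part (iii), Proposition \ref{prop:sing-end}\ref{item:S0:1} gives $\gamma_m(\theta+\pi)=(-1)^{m+1}\gamma_m(\theta)$. When $m$ is odd, $\gamma_m$ is $\pi$-periodic and the two halves of the hyperbola at $\theta$ and $\theta+\pi$ fit together, so $f^{\II}_m$ already sweeps all of $\CC^{\II}_m$. When $m$ is even, $\gamma_m(\theta+\pi)=-\gamma_m(\theta)$, so at each $\theta$ only one half of \eqref{eq:hyp0} is reached by $f^{\II}_m$, and the involution $\iota$ of \eqref{eq:involution} negates $(t,z)$ while fixing $(x,y)$, filling in the missing half. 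The main obstacle I expect is the almost-embeddedness claim at $m=2,3$: one must verify simultaneously that convexity of $\gamma_m$ rules out $xy$-coincidences of distinct regular orbits and that the hyperbolic fibers at the cone points glue exactly two sectors together without forcing extra identifications, which reduces to a finite combinatorial check over the $2m=4,6$ sectors.
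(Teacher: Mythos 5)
Your proposal matches the paper's argument essentially step for step: the paper likewise "proves" Theorem~\ref{thm:B} by summarizing the preceding discussion in Section~\ref{sec:type2} — the splitting of \eqref{eq:parametrization} into the $\theta$-dependent hypo-trochoid $(x_1,x_2)=-2\gamma_m(\theta)$ and the hyperbola fibers \eqref{eq:hyp0} for $(x_0,x_3)$, Proposition~\ref{prop:sing-end-2} for the cone points and the four limit points, and Proposition~\ref{prop:sing-end}\ref{item:S0:1}, \ref{item:S0:2} for the parity dichotomy in (iii) and the almost-embeddedness at $m=2,3$. The one place you flag as delicate (injectivity for $m=2,3$ from convexity, and the gluing along the light-like lines $L_k^{\pm}$) is treated no more explicitly in the paper than in your sketch, so your write-up is at least as complete as the original.
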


When $m\ge 4$, $\CC^{\II}_m$
has self-intersections.
It should be remarked that
similar phenomena occur
for parabolic or hyperbolic catenoids
in the class of
space-like maximal surfaces
in $\R^3_1$ (see \cite{FKKRSUYY2}).

\begin{rem}
As shown in \cite{FKKRUYY2}, 
$\CC^{\II}_m$ is analytically complete,
that is, $\CC^{\II}_m$ admits no analytic extension. 
\end{rem}

To end this paper, we remark that
the replacement 
\[
   s\mapsto \imag s\qquad (r=e^s)
\]
of the parameter of $f^{\II}_m$ induces constant
mean curvature surfaces in 
anti-de Sitter space.
This induces a
family of surfaces
\[
  \check f_m:=\bigl(x_0(s,\theta),x_1(\theta),x_2(\theta),
                  x_3(s,\theta)\bigr)
\]
given by
$(x_0,x_3)= \tfrac{1-m^2}{2m}\cos m\theta
   \bigl(\cos s,\sin s\bigr)$,
and $x_1,x_2$ as in \eqref{eq:parametrization},
where $m=2,3,4,\dots$.
For each $m$, the corresponding
surface lies in the space form 
\[
  H^3_1(-1):=\{
    (t,x,y,z)\,;\,
       t^2-x^2-y^2+z^2=-1\}
\]
of constant curvature $-1$ realized in $(\R^4_2,+--+)$.
The image of $\check f_m$ gives a compact 
almost immersed time-like surface
of constant mean curvature one
having a finite number of cone-like singularities.
Moreover, if
$m$ equals $2$ or $3$, 
the surface is almost embedded 
in the sense given in the introduction.
To draw the surfaces, we use the
`solid torus model' of $H^3_1(-1)$, that is,
we define the following projection
\ifx\distribute\undefined
\begin{multline*}
 \check \varPi:H^3_1(-1)\ni (t,x,y,z)\longmapsto \\
\ifx\undefined\distribute
 \text{\small$
\fi
 \frac{1}{\rho}\left(
 \left(1 + \frac{t}{\rho}\right)x,
 \left(1 + \frac{t}{\rho}\right)y, z\right)
\ifx\undefined\distribute
$}
\fi\in \R^3,
\end{multline*}
\else
\[
 \check \varPi:H^3_1(-1)\ni (t,x,y,z)\longmapsto 
 \frac{1}{\rho}\left(
 \left(1 + \frac{t}{\rho}\right)x,
 \left(1 + \frac{t}{\rho}\right)y, z\right)\in \R^3,
\]
\fi
where
$\rho:=\sqrt{x^2+y^2}$.
The image of $\check \varPi$
is the interior of the solid
torus obtained by
rotating the unit disk with center $(1,0,0)$
about  the third axis
in $\R^3$.
The images of $\check \varPi\circ \check f_m$
for $m=2,3$ are given in Figure~\ref{fig:anti}.
\ifx\undefined\distribute
\begin{figure}[h!]%
\else
\begin{figure}
\fi
 \begin{center}
       \includegraphics[width=3.3cm]{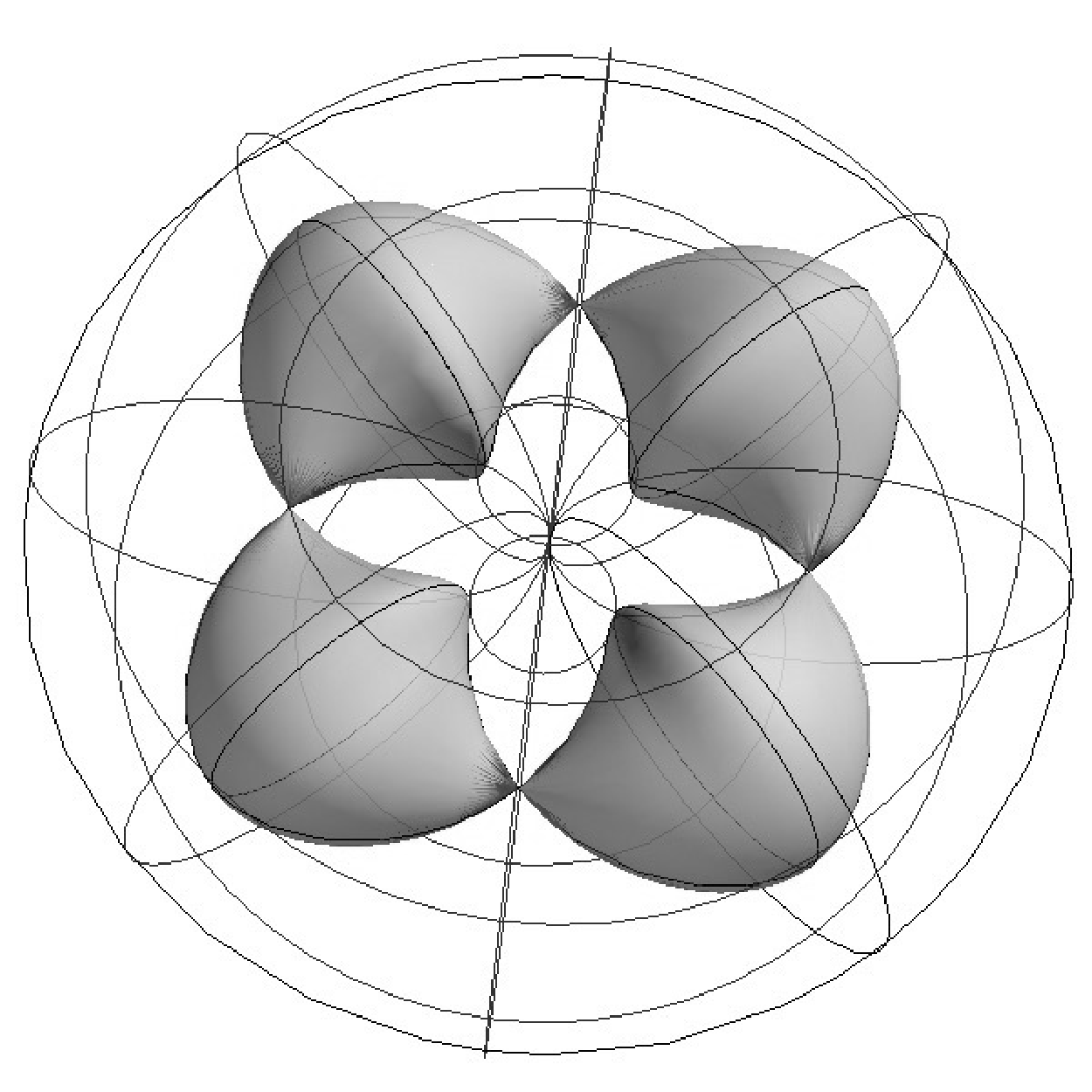} 
\ifx\undefined\distribute
\else
  \hspace{1.5cm}
\fi
       \includegraphics[width=3.3cm]{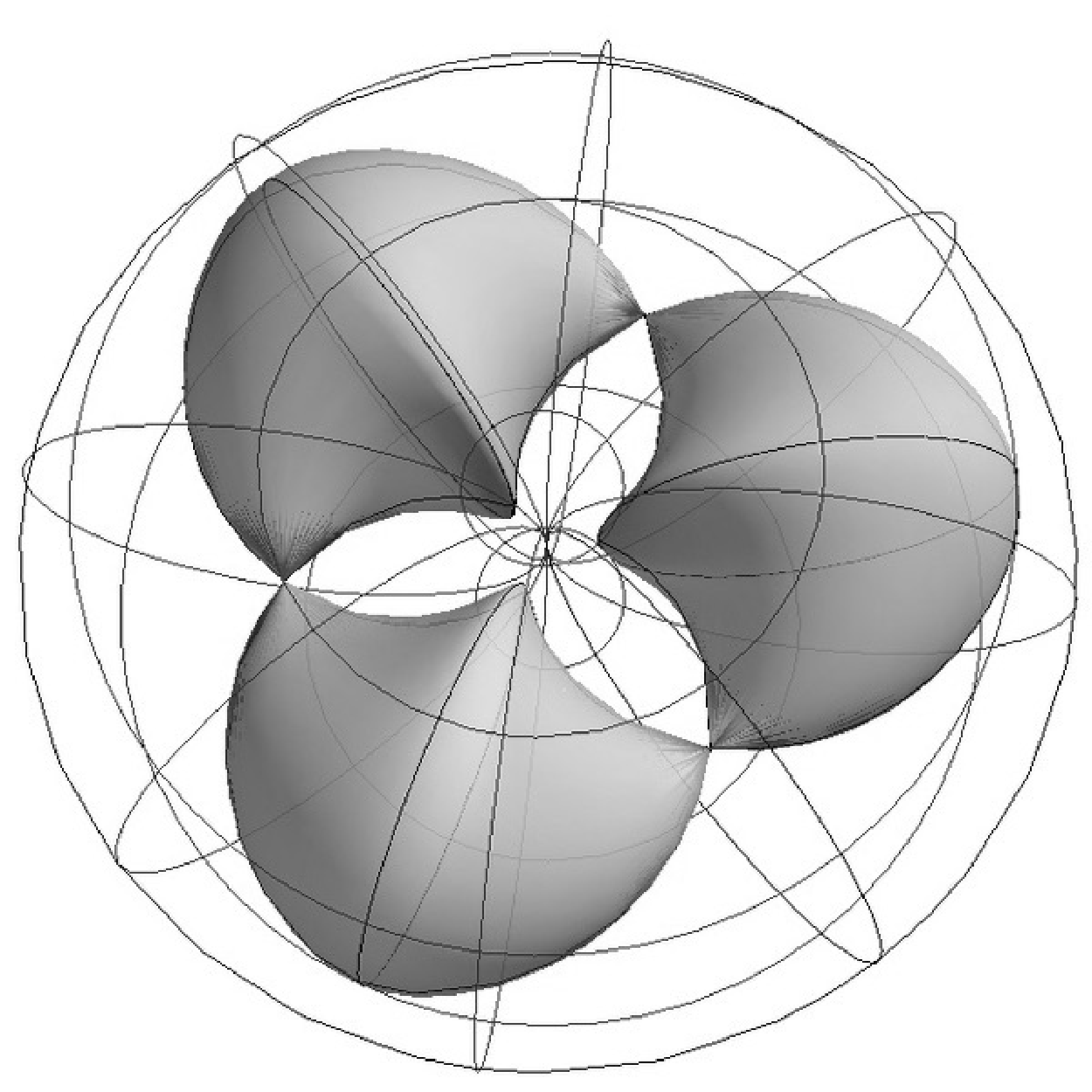} 
 \end{center}
 \caption{%
 The images of $\check f_m$ for $m=2$ (left)
 and $m=3$ (right).
 }%
\label{fig:anti}
\end{figure}

%\newpage
%\appendix
%\section{Properties of hyperbolic catenoids}
%\label{app:analytic}

\begin{ack}
 Fujimori was partially supported by the Grant-in-Aid
 for Scientific Research (C) No.\ 17K05219,
 Kawakami by (C) No.\ 15K04840,
 Kokubu by  (C) No.\ 17K05227,
 Rossman by (C) No.\ 15K04845,
 Umehara  by (A) No.\ 26247005,
 and Yamada by (B) No.\ 17H0282.
\end{ack}

\end{document}